\numberwithin{equation}{section}
\newtheorem{thm}{Theorem}[section]
\newtheorem{prop}[thm]{Proposition}
\newtheorem{cor}[thm]{Corollary}
{\bf}{\it}
\newtheorem{fthm}{Theorem}{\bf}{\it}
{\bf}{\it}
{\bf}{\it}
\theoremstyle{defn}
\newtheorem{defn}[thm]{Definition}
\theoremstyle{rem}
\newtheorem{rem}[thm]{Remark}
{\bf}{\it}
\newtheorem{definition and corollary}[thm]{Definition and Corollary}
\newcommand{\al}{\alpha}
\newcommand{\C}{{\mathbb C}}
\newcommand{\la}{\lambda}
\title{On the monoidality of Saito reflection functors}
\author{Syu \textsc{Kato} \footnote{Department of Mathematics, Kyoto University, Oiwake Kita-Shirakawa Sakyo Kyoto 606-8502 JAPAN \tt{E-mail:syuchan@math.kyoto-u.ac.jp}}}
\begin{document}
\maketitle

\begin{abstract}
We extend the definition of the Saito reflection functor of the Khovanov-Lauda-Rouquier algebras to symmetric Kac-Moody algebra case and prove that it defines a monoidal functor.
\end{abstract}

\section*{Introduction}

In \cite{Kat14}, the Saito reflection functors for the Khovanov-Lauda-Rouquier algebras of type $\mathsf{ADE}$ are introduced. It categorifies Lusztig's braid group action \cite[\S 39]{Lus93} on (a subalgebra of) of the positive half of the quantum groups in the sense of Khovanov-Lauda-Rouquier \cite{KL09, R}. They are main ingredients to construct PBW bases in the spirit of Lusztig \cite{Lus93}, and provided a certain role in the representation theory of the Khovanov-Lauda-Rouquier algebras.

The goal of this paper is to develop it little bit further, and provide some basic properties in more general setting than that of \cite{Kat14}. Let $\mathcal A := \mathbb Z [t ^{\pm 1}]$. Let $\mathfrak g$ be a symmetric Kac-Moody Lie algebra, and let $U ^+$ be the positive half of the $\mathcal A$-integral version of the quantum group of $\mathfrak g$ (see e.g. Lusztig \cite{Lus93} \S 1). Let $Q ^+ := \mathbb Z_{\ge 0} I$, where $I$ is the set of positive simple roots. We have a weight space decomposition $U^+ = \bigoplus _{\beta \in Q ^+} U^+ _{\beta}$. We have the Weyl group $W$ of $\mathfrak g$ with its set of simple reflections $\{ s_i \} _{i \in I}$. For each $\beta \in Q^+$, we have a finite set $B ( \infty )_{\beta}$ which parameterizes a pair of distinguished bases $\{ G ^{up} ( b ) \}_{b \in B ( \infty )_{\beta}}$ and $\{ G ^{low} ( b ) \} _{b \in B ( \infty )_{\beta}}$ of $\mathbb Q ( t ) \otimes _{\mathcal A} U^+ _{\beta}$. The Khovanov-Lauda-Rouquier algebra $R_{\beta}$ is a certain graded algebra whose grading is bounded from below with the following properties:
\begin{itemize}
\item The set of isomorphism classes of simple graded $R _{\beta}$-modules (up to grading shifts) is also parameterized by $B ( \infty )_{\beta}$;
\item For each $b \in B ( \infty )_{\beta}$, we have a simple graded $R_{\beta}$-module $L_b$ and its projective cover $P_b$. Let $L_{b'} \left< k \right>$ be the grade $k$ shift of $L_{b'}$, and let $[P_b : L_{b'} \left< k \right>]_0$ be the multiplicity of $L_{b'} \left< k \right>$ in $P_b$ (that is finite). Then, we have
$$G ^{low} ( b ) = \sum_{b' \in B ( \infty )_{\beta}, k \in \mathbb Z} t^k [P_b : L_{b'} \left< k \right>]_0 G^{up} ( b' );$$
\item For each $\beta, \beta' \in Q^+$, there exists an induction functor
$$\star : R_{\beta} \mathchar`-\mathsf{gmod} \times R_{\beta'} \mathchar`-\mathsf{gmod} \ni (M,N) \mapsto M \star N \in R_{\beta + \beta'} \mathchar`-\mathsf{gmod};$$
\item $\mathbf K := \bigoplus _{\beta \in Q^+} \mathbb Q ( t ) \otimes _{\mathcal A} K ( R_{\beta} \mathchar`-\mathsf{gmod} )$ is an associative algebra isomorphic to $\mathbb Q ( t ) \otimes _{\mathcal A} U^+$ with its product inherited from $\star$ (and the $t$-action is a grading shift).
\end{itemize}

For each $i \in I$ and $\beta \in Q^+$, we have certain quotients ${}_i R_{\beta}$ and ${}^i R_{\beta}$ of $R_{\beta}$. In case $s_i \beta \in Q^+$, an interpretation of Lusztig's geometric construction yields that ${}_i R_{\beta}$ and ${}^i R_{s_i \beta}$ must be Morita equivalent. This naturally enables us to define a right exact functor
$$\mathbb T _i : R _{\beta} \mathchar`-\mathsf{gmod} \longrightarrow \!\!\!\!\! \rightarrow {} ^i R _{\beta}\mathchar`-\mathsf{gmod} \stackrel{\cong}{\longrightarrow} {}_i R_{s_i \beta} \mathchar`-\mathsf{gmod} \hookrightarrow R_{s_{i} \beta} \mathchar`-\mathsf{gmod}$$
that we call the Saito reflection functor. Under this setting, our main results read:

\begin{fthm}[Theorems \ref{SRF} + \ref{braid} + \ref{main}]\label{fmain}
The functors $\{\mathbb T_i\}_{i \in I}$ satisfies the following:
\begin{enumerate}
\item There exist a right adjoint functor $\mathbb T^*_i$ of $\mathbb T_i$;
\item For each $M \in {}^i R _{\beta} \mathchar`-\mathsf{gmod}$ and $N \in {}_i R_{s_i \beta} \mathchar`-\mathsf{gmod}$, we have
$$\mathrm{ext} _{R_{s_{i} \beta}} ^{*} ( \mathbb T _i M, N ) \cong \mathrm{ext} _{R_{\beta}} ^{*} ( M, \mathbb T^* _i N );$$
\item They satisfy the braid relations;
\item For each $\beta_1, \beta_2 \in Q^+ \cap s_i Q^+$ and $M_1 \in {}^i R_{\beta_1} \mathchar`-\mathsf{gmod}$, $M_2 \in {}^i R_{\beta_2} \mathchar`-\mathsf{gmod}$, we have a natural isomorphism
$$\mathbb T_i ( M_1 \star M_2 ) \cong ( \mathbb T_i M_1 ) \star ( \mathbb T_i M_2 ).$$
Here we understand $M_1, M_2$ as modules of $R_{\beta_1}$ and $R_{\beta_2}$ through the pullbacks.
\end{enumerate}
\end{fthm}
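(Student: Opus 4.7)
The plan is to treat the four parts of the theorem in order, noting that (1)--(3) are mostly formal setup, while (4) is the main new content.

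For (1), the functor $\mathbb{T}_i$ is by construction a composition of three pieces: the quotient functor $R_\beta\mathchar`-\mathsf{gmod}\twoheadrightarrow{}^iR_\beta\mathchar`-\mathsf{gmod}$, the Morita equivalence ${}^iR_\beta\mathchar`-\mathsf{gmod}\xrightarrow{\sim}{}_iR_{s_i\beta}\mathchar`-\mathsf{gmod}$, and the pullback embedding ${}_iR_{s_i\beta}\mathchar`-\mathsf{gmod}\hookrightarrow R_{s_i\beta}\mathchar`-\mathsf{gmod}$. Each piece admits a right adjoint: the quotient has as right adjoint the inclusion of the Serre subcategory ${}^iR_\beta\mathchar`-\mathsf{gmod}\hookrightarrow R_\beta\mathchar`-\mathsf{gmod}$; the Morita equivalence is its own two-sided adjoint; and the pullback along $R_{s_i\beta}\twoheadrightarrow{}_iR_{s_i\beta}$ has right adjoint $N\mapsto\mathrm{Hom}_{R_{s_i\beta}}({}_iR_{s_i\beta},N)$, taking the maximal submodule annihilated by the defining ideal. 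Composing these in reverse produces $\mathbb{T}_i^*$.

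For (2), observe that on the full subcategories ${}^iR_\beta\mathchar`-\mathsf{gmod}$ and ${}_iR_{s_i\beta}\mathchar`-\mathsf{gmod}$, both $\mathbb{T}_i$ and $\mathbb{T}_i^*$ become exact, agreeing up to pullback with the Morita equivalence and its inverse. They therefore induce mutually adjoint exact functors at the level of bounded derived categories, and the $\mathrm{Hom}$-adjunction between complexes yields the $\mathrm{Ext}$-adjunction degree by degree. The hypothesis $M\in{}^iR_\beta\mathchar`-\mathsf{gmod}$ and $N\in{}_iR_{s_i\beta}\mathchar`-\mathsf{gmod}$ ensures that no spurious higher derived functors intervene on these inputs.

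For (3), the braid relations reduce to the rank two case by the standard argument: since the braid group is presented by relations involving only pairs $(s_i,s_j)$, each relation can be checked inside the (symmetric) rank two Kac-Moody subalgebra generated by $\alpha_i,\alpha_j$. The commutation relation $\mathbb{T}_i\mathbb{T}_j\cong\mathbb{T}_j\mathbb{T}_i$ when $i,j$ are not joined is immediate since the two reflections then act on disjoint data. The braid relation $\mathbb{T}_i\mathbb{T}_j\mathbb{T}_i\cong\mathbb{T}_j\mathbb{T}_i\mathbb{T}_j$ in the $\mathsf{A}_2$ case is essentially the content of \cite{Kat14}; the general case follows by the compatibility of the reflection functors with parabolic induction from the Levi, which transports the rank two braid relation to the ambient algebra.

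For (4), which is the heart of the theorem, the plan has two stages. First, for $\beta_1,\beta_2\in Q^+\cap s_iQ^+$ and $M_j\in{}^iR_{\beta_j}\mathchar`-\mathsf{gmod}$, I would show that $M_1\star M_2$ still lies in ${}^iR_{\beta_1+\beta_2}\mathchar`-\mathsf{gmod}$. This should follow from an examination of the restriction of $M_1\star M_2$ via the Mackey filtration, using the hypothesis $\beta_j\in Q^+\cap s_iQ^+$ to rule out the appearance of forbidden $i$-prefixes. Second, I would show that the Morita equivalence ${}^iR_\bullet\mathchar`-\mathsf{gmod}\cong{}_iR_{s_i\bullet}\mathchar`-\mathsf{gmod}$ is compatible with the induction product $\star$, via an explicit analysis of the bimodule realizing the equivalence and its behaviour under induction. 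Combining these two steps yields the natural isomorphism $\mathbb{T}_i(M_1\star M_2)\cong(\mathbb{T}_i M_1)\star(\mathbb{T}_i M_2)$. I expect this second stage, the compatibility of the Morita equivalence with induction, to be the main obstacle, requiring either a careful description of the bimodule intertwiner or, alternatively, a geometric comparison using Lusztig's realization of both reflections and induction via convolution on quiver flag varieties.
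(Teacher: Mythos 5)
Your part (1) follows the same formal route as the paper (decompose $\mathbb T_i$ into quotient, Morita equivalence, pullback, and take right adjoints piecewise), and is fine. Parts (2)--(4), however, each contain a genuine gap. For (2), the assertion that ``no spurious higher derived functors intervene'' is precisely the point that requires proof: for a general quotient algebra $A\twoheadrightarrow A/I$ and $A/I$-modules $M,N$, the groups $\mathrm{Ext}^{*}_{A}(M,N)$ and $\mathrm{Ext}^{*}_{A/I}(M,N)$ differ (already for $A=\mathbb C[x]/(x^2)$, $I=(x)$), so exactness on the subcategories and the Morita equivalence alone do not identify $\mathrm{ext}^{*}_{R_{s_i\beta}}(\mathbb T_iM,N)$ with $\mathrm{ext}^{*}_{R_\beta}(M,\mathbb T_i^*N)$. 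What makes the comparison work here is the second half of Theorem \ref{i-quotients}: the ideals ${}_iR_{\beta,k+1}e_i(k)\,{}_iR_{\beta,k+1}$ are projective as left modules, and one compares projective resolutions by a downward induction on $\epsilon_i$ and $\epsilon_i^*$. That input is absent from your argument.

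For (3), the reduction to rank two is not available: $\mathbb T_i$ is built from the quotient ${}^iR_\beta$ and a Morita equivalence that involve the entire quiver, and a general $R_\beta$-module is not induced from the $\{i,j\}$-Levi, so there is no mechanism transporting an $\mathsf A_2$ braid relation to the ambient algebra (likewise, for $i\not\leftrightarrow j$ the two functors do not act on ``disjoint data''). The paper argues differently: each $\mathbb T_i$ is an equivalence between Serre quotient categories, so each side of a braid relation is determined by its effect on simple modules, and these effects coincide by Lusztig's braid relations for the corresponding operators on the canonical basis. For (4), you correctly single out the compatibility of the Morita equivalence with $\star$ as the crux and name the geometric comparison as a possible route, but the proposal contains none of the actual argument. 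The paper's proof rests on Theorem \ref{restmain}, which identifies the bimodule ${}_iR_{\beta}\otimes_{R_{\beta_1}\boxtimes R_{\beta_2}}({}_iR_{\beta_1}\boxtimes{}_iR_{\beta_2})$ as the weight-zero part of $\mathrm{Ext}^{\bullet}_{G_V}(\mathsf p_!\mathcal L^{\Omega,\flat}_{\beta_1,\beta_2},\mathcal L^{\Omega}_{\beta})$ via a purity estimate on the cone $\mathsf{Ker}$, and on the construction of the correspondence $\mathbb O$ over $Z^{\Omega}_{V,V'}$ intertwining the convolution diagrams for $\Omega$ and $s_i\Omega$; without these two steps the claimed isomorphism remains an expectation rather than a proof.
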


We remark that Theorem \ref{fmain} confirms a conjecture in \cite{KKOP17} and provides one way to correct an error in \cite{Kat14} (see Remark \ref{ePBW} or the arXiv version of \cite{Kat14}). Also, the above result should extend to the positive characteristic case at least when $\mathfrak g$ is of type $\mathsf{ADE}$ by using \cite{M17}.

We note that Peter McNamara sent me a version of \cite{M17b} during the preparation of this paper that partly overlaps with the content of the paper.

\section{Conventions and recollections}

An algebra $R$ is a (not necessarily commutative) unital $\mathbb C$-algebra. A variety $\mathfrak X$ is a separated reduced scheme $\mathfrak X_0$ of finite type over some localization $\mathbb Z_S$ of $\mathbb Z$ specialized to $\mathbb C$. It is called a $G$-variety if we have an action of a connected affine algebraic group scheme $G$ flat over $\mathbb Z_S$ on $\mathfrak X_0$ (specialized to $\mathbb C$). As in \cite{BBD} \S 6 and \cite{BL94} (see alto \cite{Kat17}), we transplant the notion of weights to the derived category of ($G$-equivariant) constructible sheaves with finite monodromy on $\mathfrak X$. Let us denote by $D^b ( \mathfrak X )$ (resp. $D^+ ( \mathfrak X )$) the bounded (resp. bounded from the below) derived category of the category of constructible sheaves on $\mathfrak X$, and denote by $D^+ _G ( \mathfrak X )$ the $G$-equivariant derived category of $\mathfrak X$. We have a natural forgetful functor $D^+ _G ( \mathfrak X ) \to D^+ ( \mathfrak X )$, whose preimage of $D^b ( \mathfrak X )$ is denoted by $D ^b _G ( \mathfrak X )$. For an object of $D^b_G ( \mathfrak X )$, we may denote its image in $D ^b ( \mathfrak X )$ by the same letter.

\section{Quivers and the KLR algebras}\label{QKLR}
Let $\Gamma = (I, \Omega)$ be an oriented graph with the set of its vertex $I$ and the set of its oriented edges $\Omega$. Here $I$ is fixed, and $\Omega$ might change so that the underlying graph $\Gamma_0$ of $\Gamma$ is fixed. We have a symmetric Kac-Moody algebra $\mathfrak g$ with its Dynkin diagram $\Gamma_0$. We refer $\Omega$ as the orientation of $\Gamma$. We form a path algebra $\mathbb C [\Gamma]$ of $\Gamma$.

For $h \in \Omega$, we define $h' \in I$ to be the source of $h$ and $h'' \in I$ to be the sink of $h$. We denote $i \leftrightarrow j$ for $i, j \in I$ if and only if there exists $h \in \Omega$ such that $\{ h', h'' \} = \{ i, j \}$. A vertex $i \in I$ is called a sink of $\Gamma$ (or $\Omega$) if $h' \neq i$ for every $h \in \Omega$. A vertex $i \in I$ is called a source of $\Gamma$ (or $\Omega$) if $h'' \neq i$ for every $h \in \Omega$.

Let $Q^+$ be the free abelian semi-group generated by $\{ \alpha _i \} _{i \in I}$, and let $Q ^+ \subset Q$ be the free abelian group generated by $\{ \alpha _i \} _{i \in I}$. We sometimes identify $Q$ with the root lattice of $\mathfrak g$ with a set of its simple roots $\{ \alpha _i \} _{i \in I}$. Let $W = W ( \Gamma_0 )$ denote the Weyl group of type $\Gamma_0$ with a set of its simple reflections $\{ s_i \} _{i \in I}$. The group $W$ acts on $Q$ via the above identification. Let $R^+ := W \{ \alpha _i \} _{i \in I} \cap Q^+$ be the set of positive roots of $\mathfrak g$.

An $I$-graded vector space $V$ is a vector space over $\mathbb C$ equipped with a direct sum decomposition $V = \bigoplus _{i \in I} V_i$.

Let $V$ be an $I$-graded vector space. For $\beta \in Q ^+$, we declare $\underline{\dim} \, V = \beta$ if and only if $\beta = \sum _{i \in I} ( \dim V _i ) \alpha_i$. We call $\underline{\dim} \, V$ the dimension vector of $V$. Form a vector space
$$E_{V} ^{\Omega} := \bigoplus _{h \in \Omega} \mathrm{Hom} _{\mathbb C} ( V _{h'}, V _{h''} ).$$
We set $G _V := \prod_{i \in I} \mathop{GL} ( V _{i} )$. The group $G_V$ acts on $E_V ^{\Omega}$ through its natural action on $V$. The space $E_V ^{\Omega}$ can be identified with the based space of $\mathbb C [\Gamma]$-modules with its dimension vector $\beta$.

For each $k \ge 0$, we consider a sequence $\mathbf m = ( m_1,m_2,\ldots, m_{k} ) \in I ^{k}$. We abbreviate this as $\mathsf{ht} ( \mathbf m ) = k$. We set $\mathsf{wt} ( \mathbf m ) := \sum _{j = 1} ^k \alpha _{m_j} \in Q ^+$. For $\beta = \mathsf{wt} ( \mathbf m ) \in Q ^+$, we set $\mathsf{ht} \, \beta = k$. For a sequence $\mathbf m' := (m_1',\ldots, m'_{k'}) \in I ^{k'}$, we set
$$\mathbf m + \mathbf m' := ( m_1,\ldots, m_k, m_1',\ldots, m_{k'}') \in I ^{k+k'}.$$
For $i \in I$ and $k \ge 0$, we understand that $ki = (i,\ldots,i) \in I^k$.

For each $\beta \in Q ^+$, we set $Y ^{\beta}$ to be the set of all sequences $\mathbf m$ such that $\mathsf{wt} ( \mathbf m ) = \beta$. For each $\beta \in Q^+$ with $\mathsf{ht} \, \beta = n$ and $1 \le i < n$, we define an action of $\{\sigma_i\}_{i=1}^{n-1}$ on $Y^{\beta}$ as follows: For each $1 \le i < n$ and $\mathbf m = ( m_1, \ldots, m_{n} ) \in Y ^{\beta}$, we set
$$\sigma_i \mathbf m := ( m_1,\ldots, m_{i-1}, m_{i+1}, m_i, m_{i+2},\ldots, m_n).$$ 
It is clear that $\{\sigma_i\}_{i=1}^{n-1}$ generates a $\mathfrak S_n$-action on $Y^{\beta}$. In addition, $\mathfrak S_n$ naturally acts on a set of integers $\{ 1,2,\ldots, n \}$.

For $\mathbf m \in Y ^{\beta}$ and $1 \le i < \mathsf{ht} \, \beta$, we set $h_{\mathbf m, i} := \# \{ h \in \Omega \mid h' = m_i, h'' = m_{i+1}\}$ and $a_{\mathbf m, i} := h_{\mathbf m, i} + h_{\sigma_i \mathbf m, i}$.

\begin{defn}[Khovanov-Lauda \cite{KL09}, Rouquier \cite{R}]\label{KLR}
Let $\beta \in Q^+$ so that $n = \mathsf{ht} \, \beta$. We define the KLR algebra $R _{\beta}$ as a unital algebra generated by the elements $z _1, \ldots, z_n$, $\tau_1, \ldots, \tau_{n-1}$, and $e ( \mathbf m )$ $(\mathbf m \in Y ^{\beta})$ subject to the following relations:
\begin{enumerate}
\item $\deg z _i e ( \mathbf m ) = 2$ for every $i$, and
$$\deg \tau _i e ( \mathbf m ) = \begin{cases}-2 & (m _{i} = m_{i+1}) \\a_{\mathbf m, i} & (m_i \leftrightarrow m_{i+1}) \\ 0 & (otherwise) \end{cases};$$
\item $[ z _i, z _j ] = 0$, $e ( \mathbf m ) e ( \mathbf m' ) = \delta _{\mathbf m, \mathbf m'} e ( \mathbf m )$, and $\sum _{\mathbf m \in Y ^{\beta}} e ( \mathbf m ) = 1$;
\item $\tau _i e ( \mathbf m ) = e ( \sigma _i \mathbf m ) \tau _i e ( \mathbf m )$, and $\tau _i \tau _j e ( \mathbf m ) = \tau _j \tau _i e ( \mathbf m )$ for $|i-j|>1$;
\item $\tau _i ^2 e ( \mathbf m ) = Q _{\mathbf m, i} ( z _i, z _{i+1} ) e ( \mathbf m )$;
\item For each $1 \le i < n$, we have
\begin{align*}
 \tau_{i+1} \tau_i \tau_{i+1} e ( \mathbf m ) - & \tau_i \tau_{i+1} \tau_i e ( \mathbf m )\\
& = \begin{cases} \frac{Q _{\mathbf m, i} ( z _{i+2}, z _{i+1} ) - Q _{\mathbf m, i} ( z _i, z _{i+1} )}{z _{i+2} - z _i} e ( \mathbf m )& (m_{i+2} = m_i) \\ 0 & (\text{otherwise}) \end{cases};
\end{align*}
\item $\tau_i z _k e ( \mathbf m )- z _{\sigma_i (k)} \tau_i e ( \mathbf m ) = \begin{cases} - e ( \mathbf m ) & (i=k, m_i = m_{i+1}) \\ e ( \mathbf m ) & (i=k-1, m_i = m_{i+1}) \\ 0 & (\text{otherwise})\end{cases}$.
\end{enumerate}
Here we set
$$Q _{\mathbf m, i} ( u,v ) = \begin{cases} 1 & (m_i \neq m _{i+1}, m_i \not\leftrightarrow m_{i+1}) \\
(-1)^{h_{\mathbf m, i}} ( u - v )^{a_{\mathbf m, i}} & (m_i \leftrightarrow m_{i+1}) \\ 0 & (\text{otherwise}) \end{cases},$$
where $u,v$ are indeterminants. \hfill $\Box$
\end{defn}

\begin{rem}
Note that the algebra $R_{\beta}$ a priori depends on the orientation $\Omega$ through $Q _{\mathbf m, i} ( u,v )$. Since the graded algebras $R_{\beta}$ are known to be mutually isomorphic for any two choices of $\Omega$ $($cf. $\cite{R}$ \S 3.2.4 and Theorem \ref{VV}$)$, we suppress this dependence in the below.
\end{rem}

For an $I$-graded vector space $V$ with $\underline{\dim} \, V = \beta$, we define
\begin{align*}
F ^{\Omega} _{\beta} := & \Biggl\{ ( \{ F_j \} _{j=0} ^{\mathsf{ht} \beta }, x ) \Biggl| {\small \begin{matrix}  x \in E_{V} ^{\Omega}. \text{ For each $0 < j \le \mathsf{ht} \beta $,}\\
F _j \subset V \text{ is an $I$-graded vector subspace,}\\ F _{j+1} \subsetneq F_j \text{, and satisfies } x F _{j} \subset F _{j+1}. \end{matrix} }\Biggr\} \hskip 5mm \text{and}\\
\mathcal B ^{\Omega} _{\beta} := & \Biggl\{ \{ F_j \} _{j=0} ^{\mathsf{ht} \beta } \Biggl| {\small 
F _j \subset V \text{ is an $I$-graded vector subspace s.t. } F _{j+1} \subsetneq F_{j}. }\Biggr\}.
\end{align*}
We have a projection
$$\varpi _{\beta} ^{\Omega} : F ^{\Omega} _{\beta} \ni ( \{ F_j \} _{j=0} ^{\mathsf{ht} \beta }, x ) \mapsto \{ F_j \} _{j=0} ^{\mathsf{ht} \beta } \in \mathcal B ^{\Omega} _{\beta},$$
which is $G_V$-equivariant. For each $\mathbf m \in Y ^{\beta}$, we have a connected component
$$F ^{\Omega} _{\mathbf m} := \{( \{ F_j \} _{j=0} ^{\mathsf{ht} \beta }, x ) \in F ^{\Omega} _{\beta} \mid \underline{\dim} \, F_{j} / F_{j+1} = \alpha _{m_{j+1}} \hskip 2mm \forall j \} \subset F ^{\Omega} _{\beta},$$
that is smooth of dimension $d _{\mathbf m} ^{\Omega}$. We set $\mathcal B ^{\Omega} _{\mathbf m} := \varpi _{\beta} ^{\Omega} ( F ^{\Omega} _{\mathbf m} )$, that is an irreducible component of $\mathcal B ^{\Omega} _{\beta}$. Let
$$\pi ^{\Omega} _{\mathbf m} : F ^{\Omega} _{\mathbf m} \ni ( \{ F_j \} _{j=0} ^{\mathsf{ht} \beta }, x ) \mapsto x \in E ^{\Omega} _{V}$$
be the second projection that is also $G_V$-equivariant. The map $\pi ^{\Omega} _{\mathbf m}$ is projective, and hence
$$\mathcal L _{\mathbf m} ^{\Omega} := (\pi ^{\Omega} _{\mathbf m}) _! \, \underline{\mathbb C} \, [ d _{\mathbf m} ^{\Omega} ]$$
decomposes into a direct sum of (shifted) irreducible perverse sheaves with their coefficients in $D ^b ( \mathrm{pt} )$ (Gabber's decomposition theorem, \cite{BBD} \S 6.2.5). Let us denote by $\mathcal Q_{\mathbf m}^{\Omega}$ be the set of isomorphism classes of simple irreducible perverse sheaves that appear as a direct summand of $\mathcal L _{\mathbf m} ^{\Omega}$ (with some shifts). We set $\mathcal L ^{\Omega} _{\beta} := \bigoplus _{\mathbf m \in Y ^{\beta}} \mathcal L _{\mathbf m} ^{\Omega}$ and $\mathcal Q_{\beta}^{\Omega} := \bigcup _{\mathbf m \in Y ^{\beta}} \mathcal Q_{\mathbf m}^{\Omega}$. Let $e ( \mathbf m )$ be the idempotent in $\mathrm{End} ( \mathcal L ^{\Omega} _{\beta} )$ so that $e ( \mathbf m )  \mathcal L ^{\Omega} _{\beta} =  \mathcal L ^{\Omega} _{\mathbf m}$. Since $\pi ^{\Omega} _{\mathbf m}$ is projective, we conclude that $\mathbb D \mathcal L _{\mathbf m} ^{\Omega} \cong \mathcal L _{\mathbf m} ^{\Omega}$ for each $\mathbf m \in Y ^{\beta}$, and hence
\begin{equation}
\mathbb D \mathcal L ^{\Omega} _{\beta} \cong \mathcal L ^{\Omega} _{\beta}. \label{L-auto}
\end{equation}

\begin{thm}[Varagnolo-Vasserot \cite{VV11}]\label{VV}
Under the above settings, we have an isomorphism of graded algebras:
$$R _{\beta} \cong \bigoplus _{i \in \mathbb Z} \mathrm{Ext} ^i _{G_V} ( \mathcal L ^{\Omega} _{\beta}, \mathcal L ^{\Omega} _{\beta} ).$$
In particular, the RHS does not depend on the choice of an orientation $\Omega$ of $\Gamma_0$.
\end{thm}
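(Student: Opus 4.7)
The plan is to realize the right-hand side as a convolution algebra on a Steinberg-type variety, send the KLR generators to explicit geometric correspondences, verify the defining relations by local computations, and finally compare bases. Set
$$Z^\Omega_\beta := F^\Omega_\beta \times_{E^\Omega_V} F^\Omega_\beta,$$
which decomposes as the disjoint union of the pieces $F^\Omega_{\mathbf m} \times_{E^\Omega_V} F^\Omega_{\mathbf m'}$ indexed by $(\mathbf m, \mathbf m') \in Y^\beta \times Y^\beta$. Because each $\pi^\Omega_{\mathbf m}$ is proper and $G_V$-equivariant, the standard convolution formalism for constructible sheaves produces a graded algebra isomorphism between $\bigoplus_i \Ext^i_{G_V}(\mathcal L^\Omega_\beta, \mathcal L^\Omega_\beta)$ and the equivariant Borel--Moore homology of $Z^\Omega_\beta$, refined by the above block decomposition.

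On this model I would attach to each KLR generator an explicit class. The idempotent $e(\mathbf m)$ goes to the fundamental class of the diagonal block indexed by $(\mathbf m, \mathbf m)$. The element $z_j e(\mathbf m)$ acts by cap product with the $G_V$-equivariant first Chern class of the tautological line bundle $F_{j-1}/F_j$ on $F^\Omega_{\mathbf m}$. When $\sigma_j \mathbf m$ is again in $Y^\beta$, the element $\tau_j e(\mathbf m)$ is the fundamental class of the correspondence
$$F^\Omega_{\mathbf m} \times_{F^{\Omega,[j]}_\beta} F^\Omega_{\sigma_j \mathbf m} \;\hookrightarrow\; F^\Omega_{\mathbf m} \times_{E^\Omega_V} F^\Omega_{\sigma_j \mathbf m},$$
where $F^{\Omega,[j]}_\beta$ is the variant of $F^\Omega_\beta$ obtained by dropping the step $F_j$ in the flag; the prescribed degrees in Definition \ref{KLR}(1) are matched by a codimension count for these correspondences.

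Next I would verify the relations. The idempotent relations (2), the commutation $\tau_i \tau_j = \tau_j \tau_i$ for $|i-j|>1$ from (3), and the cross-relation (6) of $\tau_j$ with distant $z_k$'s all follow from base change and the projection formula. The quadratic relation (4) is the key computation: the self-composition of the $\tau_j$-correspondence is non-transverse, and the excess-intersection formula yields an Euler class on the diagonal copy of $F^\Omega_{\mathbf m}$. A careful analysis of the excess bundle, which records which of the edges between $m_j$ and $m_{j+1}$ point in the two possible directions in $\Omega$, produces exactly the sign $(-1)^{h_{\mathbf m, j}}$ and the exponent $a_{\mathbf m, j}$ appearing in $Q_{\mathbf m, j}(z_j, z_{j+1})$. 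The braid-type relation (5) is verified on the rank-three partial flag scheme where three consecutive steps are free to move; the discrepancy between $\tau_{i+1}\tau_i\tau_{i+1}$ and $\tau_i \tau_{i+1}\tau_i$ comes from the reducible component of the triple fibre product that appears only when $m_{i+2}=m_i$, and a divided-difference computation converts the excess Euler class into the displayed rational-but-polynomial expression.

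To conclude that the resulting graded algebra map $R_\beta \to \bigoplus_i \Ext^i_{G_V}(\mathcal L^\Omega_\beta, \mathcal L^\Omega_\beta)$ is an isomorphism I would compare free bases. On the algebraic side, the Khovanov-Lauda-Rouquier basis theorem shows that each $e(\mathbf m')R_\beta e(\mathbf m)$ is free over $\mathbb C[z_1,\ldots,z_n]$ with basis $\{\tau_w \mid w\cdot\mathbf m = \mathbf m'\}$ indexed by the relevant coset in $\mathfrak{S}_n$. On the geometric side, a relative cell decomposition of $F^\Omega_{\mathbf m} \times_{E^\Omega_V} F^\Omega_{\mathbf m'}$, obtained by stratifying according to the relative position of the two flags over a fixed $x \in E^\Omega_V$, yields a matching basis of Borel--Moore classes; one then checks by induction on the length of $w$ that $\tau_w e(\mathbf m)$ is sent to the fundamental class of the closure of the corresponding cell, modulo lower-order polynomial corrections. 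The hard part will be the excess-intersection computation underlying the quadratic relation, where the orientation $\Omega$ enters in the precise shape of Definition \ref{KLR}; this is the unique place where orientation sensitivity is forced on us, even though the final algebra is orientation-independent.
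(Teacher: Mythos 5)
Your outline is essentially the Varagnolo--Vasserot argument that the paper invokes by citation without reproving: identify $\bigoplus_i \mathrm{Ext}^i_{G_V}(\mathcal L^\Omega_\beta,\mathcal L^\Omega_\beta)$ with the equivariant convolution algebra on $F^\Omega_\beta\times_{E^\Omega_V}F^\Omega_\beta$, send $e(\mathbf m)$, $z_j$, $\tau_j$ to diagonal classes, Chern classes of the tautological quotients, and fundamental classes of the step-forgetting correspondences, check the relations (with $Q_{\mathbf m,j}$ arising from the excess Euler class, which is where $\Omega$ enters), and conclude by matching the KLR polynomial basis against the relative-position filtration of the Steinberg variety. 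This is the same route as the cited source, so no further comparison is needed.
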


For each $\mathbf m, \mathbf m' \in Y ^{\beta}$, we set
$$R _{\mathbf m, \mathbf m'} := e ( \mathbf m ) R _{\beta} e ( \mathbf m' ) =  \bigoplus _{i \in \mathbb Z} \mathrm{Ext} ^i _{G_V} ( \mathcal L ^{\Omega} _{\mathbf m'}, \mathcal L ^{\Omega} _{\mathbf m} ).$$

We set $S _{\beta} \subset R _{\beta}$ to be a subalgebra which is generated by $e ( \mathbf m )$ ($\mathbf m \in Y^{\beta}$) and $z_1,\ldots, z _n$.

For each $\beta_1, \beta _2 \in Q^+$ with $\mathsf{ht} \, \beta _1 = n_1$ and $\mathsf{ht} \, \beta _2 = n_2$, we have a natural inclusion:
$$
\xymatrix@=0pt{
& R _{\beta _1} \boxtimes R _{\beta _2} & \ni & e ( \mathbf m ) \boxtimes e ( \mathbf m' ) & \mapsto & e ( \mathbf m + \mathbf m' ) & \in & R _{\beta_1 + \beta _2}\\
& R_{\beta _1} \boxtimes 1 & \ni & z_i \boxtimes 1, \tau _i \boxtimes 1 & \mapsto & z _i, \tau_i & \in & R _{\beta_1 + \beta _2}\\
& 1 \boxtimes R _{\beta_2} & \ni & 1 \boxtimes z _i, 1 \boxtimes \tau _i & \mapsto & z _{i+n_1}, \tau _{i+n_1} & \in & R _{\beta_1 + \beta _2}
}.
$$
This defines an exact functor
$$\star : R_{\beta_1} \boxtimes R _{\beta _2} \mathchar`-\mathsf{gmod} \ni M _1 \boxtimes M _2 \mapsto R _{\beta _1 + \beta _2} \otimes _{R_{\beta_1} \boxtimes R _{\beta _2}} ( M _1 \boxtimes M _2 ) \in R _{\beta_1 + \beta_2} \mathchar`-\mathsf{gmod}.$$
The functor $\star$ restricts to an exact functor in the category of graded projective modules (see e.g. \cite{KL09} 2.16):
$$\star : R_{\beta_1} \boxtimes R _{\beta _2} \mathchar`-\mathsf{proj} \ni M _1 \boxtimes M _2 \mapsto R _{\beta _1 + \beta _2} \otimes _{R_{\beta_1} \boxtimes R _{\beta _2}} ( M _1 \boxtimes M _2 ) \in R _{\beta_1 + \beta_2} \mathchar`-\mathsf{proj}.$$

If $i \in I$ is a source of $\Gamma$ and $f = ( f_h ) _{h \in \Omega} \in E ^{\Omega}_V$, then we define
$$\epsilon^*_i ( f ) := \dim \ker \bigoplus_{h \in \Omega, h' = i} f _h \le \dim V _i.$$
If $i \in I$ is a sink of $\Gamma$ and $f = ( f_h ) _{h \in \Omega} \in E ^{\Omega}_V$, then we define
$$\epsilon_i ( f ) := \dim \mathrm{coker} \bigoplus_{h \in \Omega, h'' = i} f _h \le \dim V _i.$$
Each of $\epsilon^*_i ( f )$ or $\epsilon _i ( f )$ do not depend on the choice of a point in a $G_V$-orbit, and is a constructible function on $E_V^\Omega$. Hence, $\epsilon_i$ or $\epsilon^*_i$ induces a function on $E ^{\Omega}_V$ that is constant on each $G_V$-orbit, and a function on $\mathcal Q_{\beta}^{\Omega}$ through its value on an open dense subset of the support of its element whenever $i$ is a source or a sink.

\begin{prop}[Lusztig \cite{Lus91a}]\label{KScrys}
For each $i \in I$, the functions $\epsilon_i$ and $\epsilon_i ^*$ descend to functions on $\mathcal Q_{\beta}^{\Omega}$ for each $\beta \in Q^+$. In particular, it gives rise to functions on the set of isomorphism classes of simple graded $R_{\beta}$-modules $($up to degree shifts$)$.
\end{prop}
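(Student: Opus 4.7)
The plan splits into two steps: first, descent for a fixed $\Omega$ with $i$ a sink (respectively source) of $\Omega$; second, extension to arbitrary $i \in I$ by orientation change.

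For the first step, the connected group $G_V$ acts on $E_V^{\Omega}$ with finitely many locally closed smooth orbits, parametrizing isomorphism classes of $\mathbb{C}[\Gamma]$-modules of dimension vector $\beta$. Every $L \in \mathcal{Q}_{\beta}^{\Omega}$ is by construction a direct summand of the $G_V$-equivariant semisimple complex $\mathcal{L}_{\beta}^{\Omega}$, hence a simple $G_V$-equivariant perverse sheaf on $E_V^{\Omega}$. Therefore $L \cong IC(\overline{\mathcal{O}}_L, \mathcal{F}_L)$ for a unique $G_V$-orbit $\mathcal{O}_L$ and a simple $G_V$-equivariant local system $\mathcal{F}_L$ on $\mathcal{O}_L$, and $\mathcal{O}_L$ is the unique dense open $G_V$-orbit in $\mathrm{Supp}(L) = \overline{\mathcal{O}}_L$. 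When $i$ is a sink of $\Omega$, the $G_V$-invariant constructible function $\epsilon_i$ is constant on $\mathcal{O}_L$; we set $\epsilon_i(L)$ to this common value. The case of $\epsilon_i^*$ with $i$ a source is entirely parallel.

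For the second step, given an arbitrary $i \in I$, I would choose another orientation $\Omega'$ of the underlying graph $\Gamma_0$ that turns $i$ into a sink (respectively source), define the function on $\mathcal{Q}_{\beta}^{\Omega'}$ as above, and transport along the canonical bijection $\mathcal{Q}_{\beta}^{\Omega} \leftrightarrow \mathcal{Q}_{\beta}^{\Omega'}$ induced by the algebra isomorphisms of Theorem \ref{VV},
\[\bigoplus_{k} \mathrm{Ext}^k_{G_V}(\mathcal{L}_{\beta}^{\Omega}, \mathcal{L}_{\beta}^{\Omega}) \;\cong\; R_{\beta} \;\cong\; \bigoplus_{k} \mathrm{Ext}^k_{G_V}(\mathcal{L}_{\beta}^{\Omega'}, \mathcal{L}_{\beta}^{\Omega'}),\]
both of which parametrize the set of simple graded $R_{\beta}$-modules (up to grading shift).

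The main obstacle will be verifying well-definedness of this extension, i.e.\ independence from the auxiliary choice of $\Omega'$. I plan to address it by exhibiting an intrinsic $R_{\beta}$-module-theoretic description of $\epsilon_i(L)$, for instance the maximal $n \geq 0$ such that $L$, seen as a simple graded $R_{\beta}$-module, appears as a subquotient of $L(\alpha_i)^{\star n} \star M$ for some $M \in R_{\beta - n\alpha_i}\text{-}\mathsf{gmod}$, a quantity that manifestly depends only on the algebra $R_{\beta}$. Alternatively, the required independence is exactly what is established in Lusztig \cite{Lus91a}, where these functions are shown to combine with the Kashiwara operators into a crystal structure on $\mathcal{Q}_{\beta}^{\Omega}$ canonically independent of $\Omega$.
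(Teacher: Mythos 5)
Your step 1 rests on a false premise in the generality of this paper: for a general symmetric Kac--Moody quiver (anything beyond type $\mathsf{ADE}$, e.g.\ affine or wild quivers) the group $G_V$ does \emph{not} act on $E_V^{\Omega}$ with finitely many orbits, so a simple summand $L$ of $\mathcal L_\beta^{\Omega}$ is in general \emph{not} of the form $IC(\overline{\mathcal O_L},\mathcal F_L)$ for a $G_V$-orbit $\mathcal O_L$, and its support need not contain a dense orbit. The conclusion you want survives for a different reason, which is the one stated in the paper just before the proposition: $\epsilon_i$ (for $i$ a sink) is a $G_V$-invariant \emph{constructible} function, hence generically constant on the irreducible support of each element of $\mathcal Q_\beta^{\Omega}$, and one takes that generic value. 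So step 1 needs to be repaired, though the repair is routine.

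The substantive content of the proposition is your step 2, and there your argument is not closed. The paper's entire proof is a citation of \cite[Proposition 6.6]{Lus91a} for $\epsilon_i$, with $\epsilon_i^*$ obtained by reversing the order of convolution; so your second alternative (``the required independence is exactly what is established in Lusztig'') coincides with the paper's route and is acceptable as such. Your first alternative, however, is not an independent proof as written: the assertion that the geometric $\epsilon_i(L)$ equals the maximal $n$ with $e_i(n)L\neq 0$ (equivalently, your $L(\alpha_i)^{\star n}\star M$ description) is precisely Theorem \ref{crys} of the paper, i.e.\ the nontrivial comparison between supports and restriction/induction functors that Lusztig's \S 6 provides; invoking it here without proof begs the question. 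Either carry out that comparison (via Theorem \ref{Res}/Proposition \ref{i-Res}-type restriction arguments) or simply cite Lusztig as the paper does.
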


\begin{proof}
Note that \cite[Proposition 6.6]{Lus91a} considers only $\epsilon_i$, but $\epsilon_i ^*$ is obtained by swapping the order of the convolution operation.
\end{proof}

\begin{thm}[Khovanov-Lauda \cite{KL09}, Rouquier \cite{R}, Varagnolo-Vasserot \cite{VV11}]\label{VVR} In the above setting, we have:
\begin{enumerate}
\item For each $i \in I$ and $n \ge 0$, $R_{n \alpha _i}$ has a unique indecomposable projective module $P _{n i}$ up to grading shifts;
\item The functor $\star$ induces a $\mathbb Z [ t^{\pm 1} ]$-algebra structure on
$$\mathbf K := \bigoplus _{\beta \in Q^+} K ( R _{\beta} \mathchar`-\mathsf{proj} );$$
\item The algebra $\mathbf K$ is isomorphic to the integral form $U ^+$ of the positive part of the quantized enveloping algebra of type $\Gamma_0$ by identifying $[P_{ni}]$ with the $n$-th divided power of a Chevalley generator of $U ^+$;
\item The above isomorphism identifies the classes of indecomposable graded projective $R_{\beta}$-modules $(\beta \in Q ^+)$ with an element of the lower global basis of $U ^+$ in the sense of $\cite{Kas91}$;
\item There exists a set $B ( \infty ) = \bigsqcup _{\beta \in Q ^+} B ( \infty )_{\beta}$ that parameterizes indecomposable projective modules of $\bigoplus _{\beta \in Q ^+} R _{\beta} \mathchar`-\mathsf{gmod}$. This identifies the functions $\epsilon _i, \epsilon ^*_i$ $(i \in I)$ with the corresponding functions on $B ( \infty )$.
\end{enumerate}
\end{thm}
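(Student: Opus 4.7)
The strategy is to use the geometric realization of $R_\beta$ from Theorem \ref{VV} as a bridge to Lusztig's perverse sheaf categorification of $U^+$, and then to import the established results from \cite{Lus91a,KL09,R,VV11,Kas91}.

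For (1), I would observe that $Y^{n\alpha_i}$ consists of the single sequence $ni$, so the defining relations of $R_{n\alpha_i}$ reduce to those of the nil-Hecke algebra $NH_n$: the quadratic relation becomes $\tau_j^2 e(ni)=0$ and the braid relation has vanishing right-hand side. Since $NH_n$ is Morita equivalent to $\mathbb{C}[z_1,\ldots,z_n]^{\mathfrak{S}_n}$ via the longest-element divided-difference idempotent, it has a unique indecomposable graded projective up to shift, which I label $P_{ni}$. For (2), associativity of $\star$ on projectives follows from a Mackey-type argument using the transitivity of the inclusions $R_{\beta_1}\boxtimes R_{\beta_2}\boxtimes R_{\beta_3}\hookrightarrow R_{\beta_1+\beta_2+\beta_3}$; exactness on projectives is built into the definition of $\star$; and the grading shift $\langle 1\rangle$ furnishes the $\mathbb{Z}[t^{\pm 1}]$-action on $\mathbf{K}$.

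For (3), I would identify $[P_i]$ with the Chevalley generator $E_i\in U^+$ and verify the $t$-deformed Serre relations. The cleanest route is through Theorem \ref{VV}: under the geometric identification, $\mathbf{K}$ matches the Grothendieck group of the full additive subcategory generated by shifts of elements of $\mathcal{Q}^\Omega_\beta$, which is precisely Lusztig's geometric realization of $U^+$ from \cite{Lus91a}, where the Serre relations are verified at the perverse sheaf level. The divided power identity $[P_{ni}]=E_i^{(n)}$ then follows from (1) together with the decomposition $P_i^{\star n}\cong [n]_t!\, P_{ni}$, itself a direct consequence of $R_{n\alpha_i}\cong NH_n$ and the standard induction calculation.

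For (4), Verdier self-duality \eqref{L-auto} implies bar-invariance of the classes of indecomposable projectives in $\mathbf{K}$; combined with integrality and a unitriangularity calculation against the monomial classes $\{[\mathcal{L}^\Omega_\mathbf{m}]\}_{\mathbf{m}\in Y^\beta}$, this identifies $\{[P_b]\}$ with the lower global basis of \cite{Kas91}. For (5), Theorem \ref{VV} produces a bijection between isomorphism classes of indecomposable projectives and $\mathcal{Q}^\Omega_\beta$, which I take as the definition of $B(\infty)_\beta$; Proposition \ref{KScrys} then descends $\epsilon_i,\epsilon^*_i$ to functions on $B(\infty)$. The main obstacle is (3): checking the Serre relations directly inside $\mathbf{K}$ would demand substantial combinatorics with induced modules, so I would sidestep this by routing through Theorem \ref{VV} and invoking Lusztig's perverse-sheaf computation, reducing the remainder of the argument to a compilation of known statements.
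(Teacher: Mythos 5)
Your proposal is correct and follows essentially the same route as the paper, which simply cites \cite[Theorem 2.5]{Kat14} --- itself a compilation of exactly the sources you assemble (the nil-Hecke identification $R_{n\alpha_i}\cong NH_n$ for (1), Mackey-type transitivity for (2), Lusztig's perverse-sheaf realization via Theorem \ref{VV} for (3)--(4), and Proposition \ref{KScrys} for (5)). The only point worth flagging is that in (4)--(5) you should also invoke the known coincidence of Lusztig's canonical basis with Kashiwara's lower global basis and of the geometric $\epsilon_i,\epsilon_i^*$ with the crystal-theoretic ones (Kashiwara--Saito), but this is part of the standard package being cited.
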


\begin{proof}
See \cite[Theorem 2.5]{Kat14}.
\end{proof}

\begin{prop}[\cite{Kat14}]\label{purity}
The sheaf $\mathcal L ^{\Omega} _{\beta}$ can be equipped with the structure of pure weight $0$. In particular, the graded algebra $R _{\beta}$ itself is pure of weight $0$.
\end{prop}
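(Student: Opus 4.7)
The plan is to reduce the statement to Deligne's purity theorem for proper pushforwards of pure complexes. Each connected component $F_{\mathbf m}^{\Omega}$ is smooth of dimension $d_{\mathbf m}^{\Omega}$, so the shifted constant sheaf $\underline{\mathbb C}_{F_{\mathbf m}^{\Omega}}[d_{\mathbf m}^{\Omega}]$ carries a canonical pure mixed structure of weight zero under the normalization adopted in \S 1 (the standard convention under which the intersection cohomology sheaf of a smooth variety is pure of weight zero). Since $\pi_{\mathbf m}^{\Omega}$ is projective, the functor $(\pi_{\mathbf m}^{\Omega})_! = (\pi_{\mathbf m}^{\Omega})_*$ sends pure complexes of weight $w$ to pure complexes of weight $w$ by \cite[Corollaire 5.4.3]{BBD}. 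Hence each $\mathcal L_{\mathbf m}^{\Omega}$ is pure of weight zero, and $\mathcal L_{\beta}^{\Omega} = \bigoplus_{\mathbf m \in Y^{\beta}} \mathcal L_{\mathbf m}^{\Omega}$ inherits this purity as a direct sum of pure objects.

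For the claim that the graded algebra $R_{\beta}$ is pure of weight zero, I would use Theorem \ref{VV} to identify $R_{\beta}$ with $\bigoplus_i \mathrm{Ext}^i_{G_V}(\mathcal L_{\beta}^{\Omega}, \mathcal L_{\beta}^{\Omega})$. Self-duality of $\mathcal L_{\beta}^{\Omega}$ from (\ref{L-auto}) together with its purity of weight zero then forces $\mathrm{Ext}^i_{G_V}(\mathcal L_{\beta}^{\Omega}, \mathcal L_{\beta}^{\Omega})$ to be pure of weight $i$, which is precisely the statement that the graded algebra $R_{\beta}$ is pure of weight zero. In the equivariant setting one invokes the standard fact that $H^*_{G_V}(\mathrm{pt})$ is concentrated in even degrees and pure, so that the spectral sequence computing equivariant Ext from non-equivariant Ext degenerates in a weight-preserving way (as in \cite{BL94} and \cite{Kat17}).

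The main obstacle I anticipate is the bookkeeping of weight conventions in the equivariant setting: one must verify that the implicit Tate twists used to normalize the shifted constant sheaf on a smooth variety to weight zero are compatible with the weights that appear in the Ext algebra under the isomorphism of Theorem \ref{VV}. Once the conventions are pinned down consistently, the argument is a direct application of the BBD weight machinery combined with standard equivariant refinements, and no further geometric input is required.
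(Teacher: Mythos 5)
The first half of your argument --- smoothness of each $F^{\Omega}_{\mathbf m}$, purity of weight zero of the Tate-twisted shifted constant sheaf, and preservation of purity under the proper pushforward $(\pi^{\Omega}_{\mathbf m})_! = (\pi^{\Omega}_{\mathbf m})_*$ --- is correct, and it is exactly the route of \cite[Proposition 2.7]{Kat14}, to which the paper defers (the paper's own ``proof'' consists only of the remark that this argument does not use that $\Gamma_0$ is of finite type).

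The second half, however, has a genuine gap. Purity of weight $0$ of $\mathcal L^{\Omega}_{\beta}$ together with its self-duality (\ref{L-auto}) does \emph{not} force $\mathrm{Ext}^i(\mathcal L^{\Omega}_{\beta},\mathcal L^{\Omega}_{\beta})$ to be pure of weight $i$. The formal weight yoga ($a_!$ preserves ``weight $\le$'', $a_*$ preserves ``weight $\ge$'', applied to $a_*R\mathcal{H}om(\mathcal L,\mathcal L)\cong \mathbb D\, a_!(\mathcal L\otimes \mathbb D\mathcal L)$) only gives the one-sided bound that the weights of $\mathrm{Ext}^i$ are $\ge i$; self-duality just reproduces the same bound. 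The implication you invoke is false in general: $\underline{\mathbb C}[1]$ on $\mathbb G_m$ is self-dual and pure (after a half Tate twist), yet $\mathrm{Ext}^1=H^1(\mathbb G_m)$ has weight $2\neq 1$. The missing input is geometric: one identifies $\bigoplus_i\mathrm{Ext}^i_{G_V}(\mathcal L^{\Omega}_{\mathbf m'},\mathcal L^{\Omega}_{\mathbf m})$ with the ($G_V$-equivariant) Borel--Moore homology of the Steinberg-type variety $Z_{\mathbf m,\mathbf m'}:=F^{\Omega}_{\mathbf m}\times_{E^{\Omega}_V}F^{\Omega}_{\mathbf m'}$, and then uses that the projection $Z_{\mathbf m,\mathbf m'}\to \mathcal B^{\Omega}_{\mathbf m}\times\mathcal B^{\Omega}_{\mathbf m'}$ has linear fibres over each orbit of the diagonal $G_V$-action, so that $Z_{\mathbf m,\mathbf m'}$ admits a partition into affine bundles over cellular smooth projective bases. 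This cellular-fibration structure is what yields purity (and evenness) of $H^{BM}_{\bullet}(Z_{\mathbf m,\mathbf m'})$, hence of $R_{\beta}$; it is the actual content of the argument in \cite{Kat14} (and \cite{VV11}). Your reduction of the equivariant case to the non-equivariant one via the evenness and purity of $H^{\bullet}_{G_V}(\mathrm{pt})$ is fine, but only once this non-equivariant purity has been established by the above means rather than deduced formally from the purity of $\mathcal L^{\Omega}_{\beta}$.
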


\begin{proof}
The statement of \cite[Proposition 2.7]{Kat14} is only when $\Gamma_0$ is a Dynkin quiver, but the argument works in general. 
\end{proof}

Thanks to Theorem \ref{VV} and Theorem \ref{VVR} 5), we have an identification $B ( \infty ) _{\beta} \cong \mathcal Q ^{\Omega} _\beta$. Via this idenfication, each $b \in B ( \infty ) _{\beta}$ defines a $G_V$-equivariant simple perverse sheaf $\mathsf{IC}^{\Omega} ( b )$ on $E^{\Omega}_V$, where $\underline{\dim} \, V = \beta$. Each $b \in B ( \infty ) _{\beta}$ defines an indecomposable graded projective module $P_b$ of $R_{\beta}$ with simple head $L_{b}$ that is isomorphic to its graded dual $L_b ^*$.

Let $\beta \in Q ^+$ so that $\mathsf{ht} \, \beta = n$. For each $i \in I$ and $k \ge 0$, we set
\begin{align*}
Y ^{\beta} _{k,i} & := \{ \mathbf m = (m_j) \in Y ^{\beta} \mid m _1 = \cdots = m _k = i \} \text{ and}\\
Y ^{\beta,*} _{k,i} & := \{ \mathbf m = (m_j) \in Y ^{\beta} \mid m _{n} = \cdots = m _{n-k+1} = i \}.
\end{align*}
In addition, we define two idempotents of $R_{\beta}$ as:
$$e_i ( k ) := \sum _{\mathbf m \in Y ^{\beta}_{k,i}} e ( \mathbf m ), \hskip 2mm \text{ and } \hskip 2mm e_i^* ( k ) := \sum _{\mathbf m \in Y ^{\beta,*}_{k,i}} e ( \mathbf m ).$$

\begin{thm}[Lusztig \cite{Lus90a} \S 6, Lauda-Vazirani \cite{LV11} \S 2.5.1]\label{crys}
Let $\beta \in Q^+$. For each $b \in B ( \infty ) _{\beta}$ and $i \in I$, we have
\begin{align*}
\epsilon_i ( b ) & = \max \{ k \!\mid e_i ( k ) L _b \neq \{ 0 \} \} \text{ and }\\
\epsilon_i^* ( b ) & = \max \{ k \!\mid e_i^* ( k ) L _b \neq \{ 0 \} \}.
\end{align*}
Moreover, $e_i ( \epsilon_i ( b ) ) L_b$ and $e^*_i ( \epsilon_i^* ( b ) ) L_b$ are irreducible $R_{\epsilon_i ( b ) \alpha _i} \boxtimes R _{\beta - \epsilon_i ( b ) \alpha _i}$-module and $R _{\beta - \epsilon_i^* ( b ) \alpha \i} \boxtimes R _{\epsilon_i^* ( b ) \alpha _i}$-module, respectively. In addition, if we have distinct $b,b' \in B ( \infty )_{\beta}$ so that $\epsilon _i ( b ) = k = \epsilon _i ( b' )$ with $k \ge 0$, then $e_{i} ( k ) L _b$ and $e_i ( k ) L_{b'}$ are not isomorphic as an $R_{k \alpha _i} \boxtimes R _{\beta - k \alpha _i}$-module. \hfill $\Box$
\end{thm}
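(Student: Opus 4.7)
The plan is to recast everything geometrically via Theorem \ref{VV} and the identification $B(\infty)_\beta \cong \mathcal Q^\Omega_\beta$, and then apply the decomposition theorem. Choose an orientation $\Omega$ making $i \in I$ a sink (the $\epsilon_i^*$ statements are symmetric, with $i$ a source). Under these identifications $L_b$ corresponds to $\mathsf{IC}^\Omega(b)$ on $E_V^\Omega$ (with $\underline{\dim} V = \beta$), and by Proposition \ref{KScrys} the value $\epsilon_i(b)$ equals the constructible function $f \mapsto \dim\mathrm{coker}(\bigoplus_{h'' = i} f_h)$ evaluated at a generic point of $\Supp\,\mathsf{IC}^\Omega(b)$.

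The key geometric observation is that the idempotent $e_i(k) \in R_\beta$ corresponds, via Theorem \ref{VV}, to the direct summand of $\mathcal L^\Omega_\beta$ cut out by words starting with $k$ copies of $i$. Explicitly, this summand is (up to shift) the pushforward $p_!(q^* \mathcal L^\Omega_{\beta - k\alpha_i})$, where
$$\tilde F^\Omega_{k,i} := \{(W \subset V, x) \mid \underline{\dim}(V/W) = k\alpha_i,\ xV \subset W\}$$
carries the projection $p$ to $E_V^\Omega$ and a natural map $q$ to $E_W^\Omega$ by restriction. Since $i$ is a sink, a stable $W$ of co-type $k\alpha_i$ is the same as a codimension-$k$ subspace of $V_i$ containing $\mathrm{Im}(\bigoplus_{h'' = i} f_h)$, so the image of $p$ is exactly $\{f : \epsilon_i(f) \ge k\}$; hence $e_i(k) L_b \neq 0$ iff $\Supp\,\mathsf{IC}^\Omega(b) \subset \{\epsilon_i \ge k\}$, which by the genericity statement is equivalent to $\epsilon_i(b) \ge k$. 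Setting $k = \epsilon_i(b)$ and restricting to the locally closed stratum $Z_k := \{\epsilon_i = k\} \subset E_V^\Omega$, the morphism $p$ is an isomorphism onto $Z_k$ because the stable $W_i$ is forced to equal $\mathrm{Im}(\bigoplus_{h''=i} f_h)$; moreover $q \circ p^{-1} : Z_k \to E_W^\Omega$ is a Zariski-locally trivial fibration with affine fibers parameterizing the surjective arrows into $V_i$. Applying the decomposition theorem, the summand of $p_!(q^* \mathcal L^\Omega_{\beta - k\alpha_i})$ which contains $\mathsf{IC}^\Omega(b)$ restricts over $Z_k$ to an external tensor $\mathsf{IC}(Z_k) \boxtimes \mathsf{IC}^\Omega(b')$ for a unique $b' \in B(\infty)_{\beta - k\alpha_i}$, using that $R_{k\alpha_i}$ has a single indecomposable (projective, hence simple) module by Theorem \ref{VVR}(1).

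Translated back through Theorem \ref{VV}, this gives the irreducibility of $e_i(\epsilon_i(b)) L_b$ as an $R_{k\alpha_i}\boxtimes R_{\beta - k\alpha_i}$-module, and simultaneously the distinctness of the assignment $b \mapsto e_i(k) L_b$: the sheaf $\mathsf{IC}^\Omega(b)$ is recovered as the IC extension across the closure of $Z_k$ of the external tensor product, so $b \mapsto b'$ is injective on fibers of $\epsilon_i$. The main obstacle I anticipate is the bookkeeping of grading/weight shifts needed to ensure the decomposition-theorem identification is compatible with the $R_{k\alpha_i}\boxtimes R_{\beta - k\alpha_i}$-bimodule structure on $e_i(k)L_b$, together with the passage between the IC-sheaf side and the projective-module side of Theorem \ref{VVR}(4); the rigidity of $W$ on $Z_k$ is an elementary linear-algebra observation, and the pure-weight statement of Proposition \ref{purity} ensures that the decomposition-theorem summands carry the correct weight grading so the identification is canonical rather than just existential.
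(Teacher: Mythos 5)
The paper offers no proof of Theorem \ref{crys}: the result is imported from Lusztig and Lauda--Vazirani, with the $\Box$ placed directly after the statement. So your sketch is not competing with an internal argument; it is an outline of the standard geometric proof from the cited references, and as an outline it has the right shape (sink orientation, support of $\mathsf{IC}^{\Omega}(b)$ versus the image of the induction correspondence for words beginning with $ki$, restriction to the stratum $Z_k=\{\epsilon_i=k\}$).

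Two steps, however, are asserted rather than established, and they are exactly where the cited sources do their work. First, your $q$ is not a morphism to a fixed $E^{\Omega}_W$, since $W$ varies with the point; you need the two-step induction diagram with the torsor quotient (as in the diagram defining $\odot$ in \S 4 of the paper, or Lusztig's construction). Over $Z_k$, once $\mathrm{Im}(\bigoplus_{h''=i}f_h)\subset W_i$ is imposed, $f$ and $f|_W$ carry the same linear data, so the fibration comes from the choice of embedding $W_i\hookrightarrow V_i$ (a torsor/Grassmannian), not from ``affine fibers parameterizing surjective arrows''; relatedly, the unique indecomposable projective of $R_{k\alpha_i}$ is \emph{not} simple (the nilHecke algebra has a unique simple of graded dimension $[k]_t!$, a proper quotient of $P_{ki}$ for $k\ge 2$), so the inference ``projective, hence simple'' is false, although the uniqueness of the simple module, which is what you actually use, is true. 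Second, and more seriously, irreducibility of $e_i(\epsilon_i(b))L_b$ does not follow from the decomposition theorem alone. You need (i) that $\mathsf{IC}^{\Omega}(b)|_{Z_k}$ is a \emph{single} IC sheaf in a single degree --- this holds because $Z_k$ meets $\Supp\,\mathsf{IC}^{\Omega}(b)$ in a dense open subset, a point you should make explicit --- and (ii) that this geometric restriction computes the algebraic restriction functor giving the $R_{k\alpha_i}\boxtimes R_{\beta-k\alpha_i}$-action on $e_i(k)L_b$, including the grading. That Mackey-type compatibility is precisely the content of \cite{Lus90a} \S 6 and \cite{LV11} \S 2.5.1; without it, the final ``translation back through Theorem \ref{VV}'' in your argument is a genuine gap, not bookkeeping.
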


\section{Saito reflection functors}\label{sSRF}
Let $\Omega _i$ be the set of edges $h \in \Omega$ with $h'' = i$ or $h' = i$. Let $s_i \Omega _i$ be a collection of edges obtained from $h \in \Omega _i$ by setting $(s_i h)' = h''$ and $(s_i h)'' = h'$. We define $s_i \Omega := ( \Omega \backslash \Omega _i ) \cup s _i \Omega _i$ and set $s_i \Gamma := ( I, s_i \Omega )$. Note that $\Gamma_0 = ( s_i \Gamma ) _0$.

Let $V$ be an $I$-graded vector space with $\underline{\dim} \, V = \beta$. For a sink $i$ of $\Gamma$, we define
$${}_i E _V ^{\Omega} := \bigr\{ ( f _h ) _{h \in \Omega} \in E _V ^{\Omega} \mid \mathrm{coker} ( \bigoplus _{h \in \Omega, h'' = i} f _h : \bigoplus _{h'} V _{h'} \to V _i ) =  \{ 0 \} \bigl\}.$$
For a source $i$ of $\Gamma$, we define
$${}^i E _V ^{\Omega} := \bigr\{ ( f _h ) _{h \in \Omega} \in E _V ^{\Omega} \mid \mathrm{ker} ( \bigoplus _{h \in \Omega, h' = i} f _h : V _i \to \bigoplus _{h''} V _{h''} ) =  \{ 0 \} \bigl\}.$$

Let $\Omega$ be an orientation of $\Gamma$ so that $i \in I$ is a sink. Let $\beta \in Q ^+ \cap s _i Q ^+$. Let $V$ and $V'$ be $I$-graded vector spaces with $\underline{\dim} \, V = \beta$ and $\underline{\dim} \, V' = s_i \beta$, respectively. We fix an isomorphism $\phi : \oplus _{j \neq i} V_j \stackrel{\cong}{\longrightarrow} \oplus _{j \neq i} V'_j$ as $I$-graded vector spaces. We define:
$$Z _{V,V'}^{\Omega} := \Biggl\{ \{ ( f _h ) _{h \in \Omega}, ( f' _h ) _{h \in s_i \Omega}, \psi \} \Biggl| {\small \begin{matrix} ( f _h ) \in {}_i E _V^{\Omega}, ( f'_h ) \in {}^i E _{V'} ^{s_i \Omega}, \\
\phi f_h = f'_{h} \phi \text{ for } h \not\in \Omega_i \\ \psi : V_i' \stackrel{\cong}{\longrightarrow} \mathrm{ker} ( \bigoplus _{h \in \Omega_i} f_h : \bigoplus _h V_{h'} \to V_i )\end{matrix}} \Biggr\}.$$

We have a diagram:
\begin{equation}
\xymatrix{E ^{\Omega} _V & {}_i E ^{\Omega} _V \ar@{_{(}->}[l] _{j_V}& Z _{V,V'}^{\Omega} \ar@{->>}[r]^{p^i_{V'}} \ar@{->>}[l] _{q^i _{V}} & {}^i E ^{s_i \Omega} _{V'} \ar@{^{(}->}[r] ^{\jmath _{V'}}& E ^{s_i \Omega} _{V'} \hskip 5mm.}\label{df}
\end{equation}
If we set
$$G_{V,V'} := \mathop{GL} ( V_i ) \times \mathop{GL} ( V'_i ) \times \prod _{j\neq i} \mathop{GL} ( V_j ) \cong \mathop{GL} ( V_i ) \times \mathop{GL} ( V'_i ) \times \prod _{j\neq i} \mathop{GL} ( V'_j ),$$
then the maps $p^i_{V'}$ and $q^i_{V}$ are $G_{V,V'}$-equivariant.

\begin{prop}[Lusztig \cite{Lus98}]\label{bd}
The morphisms $p ^i_V$ and $q ^i _{V}$ in $(\ref{df})$ are $\mathrm{Aut} ( V _i )$-torsor and $\mathrm{Aut} ( V' _i )$-torsor, respectively. \hfill $\Box$
\end{prop}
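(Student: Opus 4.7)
The plan is to exhibit the fibers of $q^i_V$ and $p^i_{V'}$ explicitly as spaces of linear isomorphisms between vector spaces of equal dimension, and to transport the natural torsor actions of $\Aut(V'_i)$ and $\Aut(V_i)$. The entire argument reduces to one numerical identity that guarantees the fibers are non-empty.

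First I verify the key dimension count: if $(f_h) \in {}_iE^{\Omega}_V$, then $K := \ker(\bigoplus_{h \in \Omega_i} f_h) \subset \bigoplus_{h \in \Omega_i} V_{h'}$ has $\dim K = \dim V'_i$. Since $i$ is a sink of $\Gamma$, every $h \in \Omega_i$ has $h'' = i$ and $h' \ne i$, and the surjectivity of $\bigoplus_{h \in \Omega_i} f_h$ built into the definition of ${}_iE^{\Omega}_V$ gives $\dim K = \sum_{h \in \Omega_i} \dim V_{h'} - \dim V_i$. In the symmetric Kac--Moody setting the Cartan matrix satisfies $a_{ij} = -\#\{h \in \Omega : \{h',h''\} = \{i,j\}\}$ for $i \ne j$, hence $\langle \beta, \alpha_i^\vee \rangle = 2\dim V_i - \sum_{h \in \Omega_i} \dim V_{h'}$ and $\dim V'_i = \dim V_i - \langle \beta, \alpha_i^\vee \rangle = \dim K$. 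The dual identity $\dim V_i = \dim \coker(\bigoplus_{h \in s_i\Omega_i} f'_h)$ for $(f'_h) \in {}^iE^{s_i\Omega}_{V'}$ follows by the same computation with $\beta$ replaced by $s_i\beta$.

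For $q^i_V$, a point in the fiber over $(f_h) \in {}_iE^{\Omega}_V$ is parametrized by an isomorphism $\psi : V'_i \xrightarrow{\cong} K$. The components $f'_h$ for $h \notin \Omega_i$ are forced by $\phi f_h = f'_h\phi$, while the components $f'_{s_ih}$ for $h \in \Omega_i$ are read off from the composite $V'_i \xrightarrow{\psi} K \hookrightarrow \bigoplus_{h \in \Omega_i} V_{h'} \xrightarrow{\phi} \bigoplus_{h \in \Omega_i} V'_{h'}$; injectivity of $\psi$ ensures $(f'_h) \in {}^iE^{s_i\Omega}_{V'}$ automatically. The fiber is therefore canonically $\mathrm{Isom}(V'_i, K)$, a principal homogeneous space for $\Aut(V'_i)$ under precomposition, and this action globalizes because $K$ forms a subbundle of constant rank inside the trivial bundle $\bigoplus_{h \in \Omega_i} V_{h'}$ over ${}_iE^{\Omega}_V$.

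For $p^i_{V'}$ I dualize: given $(f'_h) \in {}^iE^{s_i\Omega}_{V'}$, the composite $V'_i \hookrightarrow \bigoplus_{h \in \Omega_i} V'_{h'} \xrightarrow{\phi^{-1}} \bigoplus_{h \in \Omega_i} V_{h'}$ has image $I$ of dimension $\dim V'_i$, and by the dual count the quotient has dimension exactly $\dim V_i$. Reconstructing $(f_h)$ amounts to choosing an isomorphism $(\bigoplus_{h \in \Omega_i} V_{h'})/I \xrightarrow{\cong} V_i$ (from which $\bigoplus_{h \in \Omega_i} f_h$ is read off; the remaining $f_h$ are fixed by $\phi$), and the set of such isomorphisms is an $\Aut(V_i)$-torsor under postcomposition; the iso $\psi$ is then the canonical factorization $V'_i \xrightarrow{\cong} I = \ker(\bigoplus f_h)$. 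The main technical obstacle I anticipate is not the pointwise linear algebra above but the check that these fiberwise constructions give algebraic principal bundles, i.e. that $q^i_V$ and $p^i_{V'}$ are smooth with their claimed torsor structures; this however reduces to the standard fact that, over a base scheme, the scheme of isomorphisms of two vector bundles of the same constant rank is a principal bundle for the corresponding general linear group scheme.
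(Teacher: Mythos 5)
Your proof is correct, and it is essentially the standard argument: the paper itself gives no proof here, simply citing Lusztig, and your dimension count $\dim V'_i=\sum_{h\in\Omega_i}\dim V_{h'}-\dim V_i$ together with the identification of the fibers with $\mathrm{Isom}(V'_i,K)$ and $\mathrm{Isom}\bigl((\bigoplus_h V_{h'})/I,\,V_i\bigr)$ is exactly what the cited reference does. One small point worth making explicit: your computation of the fiber of $p^i_{V'}$ (and the claim that $f'_{s_ih}$ is ``read off'' from $\psi$) uses the compatibility that $\bigoplus_h f'_{s_ih}$ equals the composite $V'_i\xrightarrow{\psi}\ker(\bigoplus_h f_h)\hookrightarrow\bigoplus_h V_{h'}\xrightarrow{\phi}\bigoplus_h V'_{h'}$; this condition is part of Lusztig's definition of $Z^{\Omega}_{V,V'}$ but is not literally written in the paper's displayed definition, and without it the fibers would be strictly larger than a single $\Aut$-orbit, so you are right to build it in.
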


When $\beta = \underline{\dim} \, V$, we set
$${} _i R _{\beta} ^{\Omega} := \mathrm{Ext} ^{\bullet} _{G_V} ( j_V ^* \mathcal L ^{\Omega} _{V}, j_V ^* \mathcal L ^{\Omega} _{V} ) \hskip 2mm \text{ and } \hskip 2mm {} ^i R _{s_i \beta} ^{s_i \Omega} := \mathrm{Ext} ^{\bullet} _{G_{V'}} ( \jmath _{V'}^* \mathcal L ^{s_i \Omega} _{V'}, \jmath _{V'} ^* \mathcal L ^{s_i \Omega} _{V'} ).$$

For each $k > 0$, we fix an $I$-graded vector subspace $U _{k} \subset V$ so that $\underline{\dim} \, U_{k} = \beta - k \alpha_{i}$ and an $I$-graded vector subspace $U' _{k} \subset V'$ so that $\underline{\dim} \, U'_{k} = s_{i} \beta - k \alpha_{i}$. We have natural embeddings $\kappa _{k}: E_{U_{k}}^{\Omega} \subset E_{V}^{\Omega}$ and $\eta _{k}: E_{U'_{k}}^{s_{i} \Omega} \subset E_{V'}^{s_{i} \Omega}$ by adding a trivial $\C [\Gamma]$-module of its dimension vector $k \al_i$.

\begin{thm}[Lusztig \cite{Lus91a}]\label{Res}
Let $k > 0$. The restriction $\kappa _{k} ^{*} \mathcal L_{\beta}^{\Omega}$ is a direct sum of shifted perverse sheaves in $\mathcal Q_{\beta - k \alpha_{i}} ^{\Omega}$. Similarly, the restriction $\eta _{k} ^{*} \mathcal L_{s_{i}\beta}^{s_{i} \Omega}$ is a direct sum of shifted perverse sheaves in $\mathcal Q_{s_i \beta - k \alpha_{i}} ^{s_{i} \Omega}$.
\end{thm}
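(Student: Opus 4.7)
Our plan is to reduce to analyzing each summand $\mathcal{L}_\mathbf{m}^\Omega = (\pi_\mathbf{m}^\Omega)_! \underline{\mathbb C}[d_\mathbf{m}^\Omega]$ by proper base change, stratify the resulting fibered product by the ``position'' of the added trivial summand, and then invoke purity (Proposition \ref{purity}) together with the Beilinson-Bernstein-Deligne decomposition theorem to conclude that $\kappa_k^{*} \mathcal{L}_\beta^\Omega$ decomposes into a direct sum of shifts of simple perverse sheaves lying in $\mathcal Q_{\beta-k\alpha_i}^\Omega$.

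Concretely, fix an $I$-graded complement $T_k \subset V$ with $\underline{\dim}\, T_k = k\alpha_i$, so that $V = U_k \oplus T_k$. The image $\kappa_k(E_{U_k}^\Omega)$ is the closed subvariety of $x \in E_V^\Omega$ for which $T_k$ is an $x$-stable direct summand on which $x$ acts by zero. Since $\pi_\mathbf{m}^\Omega$ is projective, proper base change gives
$$\kappa_k^{*}\, \mathcal{L}_\mathbf{m}^\Omega \;\cong\; \tilde\pi_{!} \, \underline{\mathbb C}[d_\mathbf{m}^\Omega],$$
where $\tilde F_\mathbf{m} := F_\mathbf{m}^\Omega \times_{E_V^\Omega} E_{U_k}^\Omega$ parametrizes $(F_\bullet, x)$ with $x \in \kappa_k(E_{U_k}^\Omega)$ and $F_\bullet$ an $x$-stable flag of type $\mathbf m$ in $V$, and $\tilde\pi$ is the projection to $E_{U_k}^\Omega$. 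Stratify $\tilde F_\mathbf{m}$ by the subset $J = \{j_1 < \cdots < j_k\} \subseteq \{1,\ldots,n\}$ recording those indices at which the flag step $F_{j_\ell - 1}/F_{j_\ell}$ is generated by the image of $T_k$; necessarily $m_{j_\ell} = i$ for each $\ell$. For each admissible $J$, the stratum is a smooth locally trivial bundle over $F_{\mathbf m'}^\Omega$ with $\mathbf m' \in Y^{\beta - k\alpha_i}$ obtained by deleting the entries of $\mathbf m$ at positions in $J$, the fiber parametrizing how the (trivial) $T_k$-filtration is interleaved into the ambient flag.

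The filtration on $\tilde\pi_! \underline{\mathbb C}[d_\mathbf{m}^\Omega]$ induced by this stratification thus has associated graded pieces that are shifts of the sheaves $\mathcal L_{\mathbf m'}^\Omega$, each of which is by definition a direct sum of (shifted) perverse sheaves in $\mathcal Q_{\beta-k\alpha_i}^\Omega$. Since $\mathcal L_\beta^\Omega$ is pure of weight zero (Proposition \ref{purity}) and closed embeddings preserve weights, $\kappa_k^{*}\mathcal L_\mathbf{m}^\Omega$ is pure; the BBD decomposition theorem then upgrades the stratification filtration into an actual direct sum decomposition into shifted simple perverse sheaves, whose identification with IC sheaves of strata closures places them in $\mathcal Q_{\beta-k\alpha_i}^\Omega$. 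The second statement, about $\eta_k^{*} \mathcal L_{s_i \beta}^{s_i \Omega}$, follows by the symmetric argument upon exchanging sink and source roles, or equivalently by applying Verdier duality and (\ref{L-auto}). The main technical obstacle is executing the stratification precisely: one must verify that each stratum really is a smooth locally trivial bundle over $F_{\mathbf m'}^\Omega$ with fiber dimension calibrated so that the shift $d_\mathbf{m}^\Omega$ becomes exactly $d_{\mathbf m'}^\Omega$ plus the expected correction from the fiber, and that these bundle structures are compatible with the $G_V$-equivariance so the graded pieces match $G_{U_k}$-equivariant shifts of the $\mathcal L_{\mathbf m'}^\Omega$.
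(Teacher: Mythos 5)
Your overall strategy---proper base change to the fibered product $\tilde F_{\mathbf m}=F_{\mathbf m}^{\Omega}\times_{E_V^{\Omega}}E_{U_k}^{\Omega}$, stratification by the position of $T_k$ relative to the flag, and identification of the associated graded pieces with shifted $\mathcal L_{\mathbf m'}^{\Omega}$---is essentially a re-derivation of Lusztig's restriction theorem in the special case $\underline{\dim}\,T_k=k\alpha_i$, which is exactly what the paper does, except that the paper simply cites \cite[Proposition 4.2]{Lus91a} after observing that, with $\mathbf W$ and $\mathbf T$ arranged according to whether $i$ is a sink or a source, Lusztig's projection $p$ becomes an isomorphism and his restriction functor literally computes $\kappa_k^*$ up to shift. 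The problem is the step where you split the filtration. You assert that ``closed embeddings preserve weights,'' hence $\kappa_k^*\mathcal L_{\mathbf m}^{\Omega}$ is pure, hence the filtration splits. This is false: for a closed immersion $\kappa$, the functor $\kappa^*$ only preserves the condition ``of weights $\le w$'' (it is $\kappa^!$ that preserves ``of weights $\ge w$''); see \cite[5.1.14]{BBD}, which is precisely how the paper uses these facts elsewhere (e.g.\ in Theorem \ref{restmain}). The $*$-restriction of a pure complex to a closed subvariety is in general genuinely mixed, so purity of $\kappa_k^*\mathcal L_{\mathbf m}^{\Omega}$ does not come for free, and without it neither the decomposition theorem nor the semisimplicity criterion for pure complexes applies. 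Nor can you recover purity by applying the decomposition theorem to $\tilde\pi$ directly: $\tilde F_{\mathbf m}$ is not smooth, because the fiber of $F_{\mathbf m}^{\Omega}\to\mathcal B_{\mathbf m}^{\Omega}$ intersected with $E_{U_k}^{\Omega}$ has dimension jumping with the relative position of $F_\bullet$ and $T_k$. The splitting of the exact triangles coming from the stratification is exactly the nontrivial content of \cite[Proposition 4.2]{Lus91a}, and your argument does not supply it.

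Two smaller points. First, the fibers of your strata over $F_{\mathbf m'}^{\Omega}$ are not just affine: they contain the full flag variety of $T_{k,i}\cong\mathbb C^k$ (the choices of how the trivial summand is refined into one-dimensional steps), so the graded pieces are shifts of $\mathcal L_{\mathbf m'}^{\Omega}$ with multiplicities given by $H^{\bullet}$ of that flag variety; this does not affect the conclusion but should be stated. Second, the reduction of the $\eta_k^*$ statement to the $\kappa_k^*$ statement via Verdier duality and (\ref{L-auto}) does not work as written: duality exchanges $\eta_k^*$ with $\eta_k^!$, not sources with sinks. The parallel argument with the roles of sub and quotient exchanged (which is what the paper means by ``appropriately arrange $\mathbf W$ and $\mathbf T$'') is the correct route.
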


\begin{proof}
The assertion is exactly \cite{Lus91a} Proposition 4.2 since the projection map $p$ (in the notation of \cite{Lus91a}) is an isomorphism if we appropriately arrange $\mathbf W$ and $\mathbf T$ in \cite[\S 4.1]{Lus91a}.
\end{proof}

We set ${}_{i} E_{V,k}^{\Omega} := G_V E_{U_k}^{\Omega}$, ${}_{i} E_{V,(k)}^{\Omega} := {}_{i} E_{V,k}^{\Omega} \backslash {}_{i} E_{V,k+1}^{\Omega}$, $i_k : {}_{i} E_{V,k}^{\Omega} \hookrightarrow E_V^{\Omega}$ and $j_k : {}_{i} E_{V,(k)}^{\Omega} \hookrightarrow {}_{i} E_{V,k}^{\Omega}$ for each $k > 0$. We have ${}_i E_{V,k}^{\Omega} = \bigsqcup_{k' \ge k} {}_{i} E_{V,(k')}^{\Omega}$, and we have $\epsilon_i ( x ) = k$ for $x \in {}_{i} E_{V,(k)}^{\Omega}$. The map $i_k$ is closed immersion, and the map $j_k$ is an open embedding. We set ${}^{i} E_{V',k}^{s_i\Omega} := G_{V'} E_{U'_k}^{s_i\Omega}$, and we define similar maps $\imath_k, \jmath_k$ for them that we use only as ``an analogous" situation.

\begin{prop}\label{i-Res}
Let $k > 0$. The sheaf $i_k ^{*} \mathcal L_{\beta}^{\Omega}$ is the direct sum of shifted perverse sheaves in $\mathcal Q_{\beta} ^{\Omega}$ supported on ${}_{i} E_{V,k}^{\Omega}$ if we restrict them to ${}_{i} E_{V,(k)}^{\Omega}$. Similarly, the restriction $\imath_{k} ^{*} \mathcal L_{s_{i}\beta}^{s_{i} \Omega}$ is a direct sum of shifted perverse sheaves in $\mathcal Q_{s_i \beta} ^{s_{i} \Omega}$ supported on ${}^{i} E_{V',k}^{s_i\Omega}$ along the loci with $\epsilon_i^* = k$.
\end{prop}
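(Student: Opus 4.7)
The plan is to reduce the statement to Theorem~\ref{Res} via a smooth proper correspondence that encodes the $G_V$-action on the image of $\kappa_k$. First I would introduce the incidence variety
$$\mathcal F_k := \{ (x, W) \in E_V^\Omega \times \mathrm{Gr}_{\dim V_i - k}(V_i) \mid \mathrm{Im}(\bigoplus_{h''=i} x_h) \subseteq W \},$$
equipped with a proper projection $p_1 : \mathcal F_k \to E_V^\Omega$ of image ${}_i E_{V,k}^\Omega$, together with a second (smooth) projection to $E_{U_k}^\Omega$ (modulo the $G_V/P_U$-action, where $P_U \subset G_V$ is the stabilizer of $U_k$). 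The key geometric observation is that over the open stratum ${}_i E_{V,(k)}^\Omega$ the subspace $W$ is forced to equal the image of the incoming maps at $i$, so $p_1$ restricts to an isomorphism onto this stratum.

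Using the $G_V$-equivariance of $\mathcal L_\beta^\Omega$ together with base change, I would identify $p_1^* \mathcal L_\beta^\Omega$ (up to a uniform shift) with the pullback of $\kappa_k^* \mathcal L_\beta^\Omega$ along the smooth projection to $E_{U_k}^\Omega$. Applying Theorem~\ref{Res} to $\kappa_k^* \mathcal L_\beta^\Omega$, and invoking that smooth pullback preserves direct sums of shifted simple perverse sheaves (up to uniform shifts), one then obtains a direct sum decomposition of $p_1^* \mathcal L_\beta^\Omega$ into shifted $G_V$-equivariant simple perverse sheaves. Restricting to the locus over ${}_i E_{V,(k)}^\Omega$, where $p_1$ is an isomorphism, yields the required direct sum decomposition of $(j_k \circ i_k)^* \mathcal L_\beta^\Omega$ on ${}_i E_{V,(k)}^\Omega$.

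Each simple summand extends uniquely, by intermediate extension along ${}_i E_{V,(k)}^\Omega \hookrightarrow {}_i E_{V,k}^\Omega \hookrightarrow E_V^\Omega$, to a $G_V$-equivariant simple perverse sheaf $\widetilde Q$ on $E_V^\Omega$ with support contained in ${}_i E_{V,k}^\Omega$. To conclude that $\widetilde Q \in \mathcal Q_\beta^\Omega$, I would invoke Lusztig's theory of induction for perverse sheaves on quiver varieties (\cite{Lus91a}), under which the correspondence $\mathcal F_k$ realizes the operation of ``adding a trivial $k \alpha_i$-module at the sink $i$" on simples and hence sends simples in $\mathcal Q_{\beta - k \alpha_i}^\Omega$ to simples in $\mathcal Q_\beta^\Omega$. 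The main obstacle will be the careful tracking of cohomological shifts and the verification of the $G_V$-equivariant identification through $\mathcal F_k$; the analogous statement for $\imath_k^* \mathcal L_{s_i \beta}^{s_i \Omega}$ then follows by the same argument with $\epsilon_i^*$ in place of $\epsilon_i$ and the source construction in place of the sink.
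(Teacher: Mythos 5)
Your proposal is correct and is essentially the paper's own argument: your incidence variety $\mathcal F_k$ is exactly the paper's correspondence $\mathop{GL}(n) \times_{P_k} E_{U_k}^{\Omega}$, and the strategy --- reduce to Theorem~\ref{Res} via Lusztig's induction along this proper correspondence, then split off $i_k^*\mathcal L_{\beta}^{\Omega}$ over the open stratum ${}_i E_{V,(k)}^{\Omega}$ where the correspondence becomes trivial --- is the same. (Your observation that the correspondence restricts to an isomorphism over ${}_i E_{V,(k)}^{\Omega}$ is in fact the accurate description of the fiber there.)
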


\begin{proof}
As the proofs of the both cases are completely parallel, we concentrate to the case of $i_k ^{*} \mathcal L_{\beta}^{\Omega}$. 

The map $\kappa_k$ factors through $i_k$ as
$$E_{U_k}^{\Omega} \stackrel{\kappa_k'}{\longrightarrow} {}_i E_{V,k}^{\Omega} \stackrel{i_k}{\longrightarrow} E_{V}^{\Omega}$$
for each $k$. Thus, Theorem \ref{Res} asserts that $( \kappa _k' )^* i_k ^{*} \mathcal L_{\beta}^{\Omega}$ is a direct sum of shifted perverse sheaves in $\mathcal Q_{\beta - k \alpha_{i}} ^{\Omega}$. We set $n := \dim \, V_i$. Let $P_k \subset \mathop{GL} ( n ) \cong \mathop{GL} ( V _i )$ be the parabolic subgroup so that its Levi part is $\mathop{GL} ( n - k ) \times \mathop{GL} ( k )$ and stabilizes $E_{U_k}^{\Omega} \subset E_{V}^{\Omega}$. Then, we have a map
$$\pi_k : \mathop{GL} ( n ) \times_{P_k} E_{U_k}^{\Omega} \longrightarrow E_V^{\Omega},$$
that is projective over the image. Note that $\pi_k$ is locally trivial fibration over ${}_{i} E_{V,(k)}^{\Omega}$ with its fiber isomorphic to $\mathrm{Gr} ( k, n )$. 

The sheaf $(\pi_k )_* \pi_k^* i_k ^{*} \mathcal L_{\beta}^{\Omega}$ can be regarded as the induction of the sheaf $\kappa^*_k \mathcal L_{\beta}^{\Omega}$, and hence it is a direct sum of shifted perverse sheaves in $\mathcal Q_{\beta} ^{\Omega}$. The above argument tells us that $i_k ^{*} \mathcal L_{\beta}^{\Omega}$ is a direct summand of $(\pi_k )_* \pi_k^* i_k ^{*} \mathcal L_{\beta}^{\Omega}$ when restricted to ${}_{i} E_{V,(k)}^{\Omega}$. Therefore, we conclude that $i_k ^{*} \mathcal L_{\beta}^{\Omega}$ is a direct sum of shifted perverse sheaves in $\mathcal Q_{\beta} ^{\Omega}$ supported on ${}_{i} E_{V,k}^{\Omega}$ restricted to ${}_{i} E_{V,(k)}^{\Omega}$ as required.
\end{proof}

For each $k > 0$, we define
$${} _i R _{\beta, k} ^{\Omega} := \mathrm{Ext} ^{\bullet} _{G_V} ( j_{V,k} ^* \mathcal L ^{\Omega} _{V}, j_{V,k} ^* \mathcal L ^{\Omega} _{V} ),$$
where $j_{V, k} : E_{V}^{\Omega} \backslash {}_{i} E_{V,k}^{\Omega} \hookrightarrow E_{V}^{\Omega}$. By definition, we have ${} _i R _{\beta, 1} ^{\Omega} = {} _i R _{\beta} ^{\Omega}$. By convention, we have $j_{V,k} = \mathrm{id}$ for $k > \dim \, V_i$, and we have ${} _i R _{\beta, k} ^{\Omega} = R _{\beta} ^{\Omega}$ in this case. We also define ${} ^i R _{s_i \beta, k} ^{s_i \Omega}$ in a similar fashion, that we use only as ``an analogous" situation.

\begin{thm}\label{i-quotients}
For each $k > 0$, we have an algebra isomorphism
$${} _i R _{\beta, k} ^{\Omega} \cong R_{\beta} / ( R_{\beta} e_i ( k ) R_{\beta} ).$$
Moreover, ${} _i R _{\beta, k+1} ^{\Omega} e_i ( k ) {} _i R _{\beta, k+1} ^{\Omega}$ is projective as a ${} _i R _{\beta, k+1} ^{\Omega} $-module. Similarly, the algebra ${} ^i R _{s_i \beta, k} ^{s_i \Omega}$ is isomorphic to $R _{s_i \beta} ^{s_i \Omega} / ( R _{s_i \beta} ^{s_i \Omega} e_i ^* ( k ) R _{s_i \beta} ^{s_i \Omega} )$, and ${} ^i R _{s_i \beta, k+1} ^{s_i \Omega} e_i^{*} ( k ) {} ^i R _{s_i \beta, k+1} ^{s_i \Omega}$ is projective as a ${} ^i R _{s_i \beta, k+1} ^{s_i \Omega}$-module. In particular, the algebras ${} _i R _{\beta, k} ^{\Omega}$ and ${} ^i R _{s_i \beta, k} ^{s_i \Omega}$ do not depend on the choice of $\Omega$.
\end{thm}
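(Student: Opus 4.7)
The plan is to work geometrically via Theorem \ref{VV} and the stratification of $E^\Omega_V$ by the upper semicontinuous constructible function $\epsilon_i$. Invoking Gabber's decomposition theorem, I would first split
$\mathcal L^\Omega_\beta = \mathcal L_{\geq k} \oplus \mathcal L_{<k}$,
where $\mathcal L_{\geq k}$ collects the shifted simple summands $\mathsf{IC}^\Omega(b)$ with $\epsilon_i(b) \geq k$; because $\{\epsilon_i \geq k\} = {}_i E^\Omega_{V,k}$ is closed, the summand $\mathcal L_{\geq k}$ has support inside ${}_i E^\Omega_{V,k}$, and in particular $j_{V,k}^* \mathcal L_{\geq k} = 0$. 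The key geometric claim is that $\mathcal L_{\geq k}$ and $e_i(k) \mathcal L^\Omega_\beta$ share exactly the same isomorphism classes of simple summands: on one hand, for any $\mathbf m \in Y^\beta_{k,i}$ the condition $xF_j \subset F_{j+1}$ on a point of $F^\Omega_{\mathbf m}$ forces the image of $\bigoplus_{h''=i} x_h$ to lie in a codimension-$k$ subspace of $V_i$, so $\epsilon_i \geq k$ on $\pi^\Omega_{\mathbf m}(F^\Omega_{\mathbf m})$; conversely, given $x$ with $\epsilon_i(x) \geq k$, one builds a chain of codimension-one $I$-graded subspaces containing the image of $\bigoplus_{h''=i} x_h$, producing a flag in $F^\Omega_{\mathbf m}$ for some $\mathbf m \in Y^\beta_{k,i}$. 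It follows that the two-sided ideal $R_\beta e_i(k) R_\beta$ coincides with the ideal of endomorphisms of $\mathcal L^\Omega_\beta$ factoring through $\mathcal L_{\geq k}$.

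Next I would analyze the restriction map $R_\beta \to {}_i R^\Omega_{\beta,k}$ induced by $j_{V,k}^*$. Since $j_{V,k}^* \mathcal L_{\geq k} = 0$, it factors as
\[
R_\beta \twoheadrightarrow \mathrm{End}^\bullet_{G_V}(\mathcal L_{<k}) \xrightarrow{j_{V,k}^*} \mathrm{End}^\bullet_{G_V}(j_{V,k}^* \mathcal L_{<k}) = {}_i R^\Omega_{\beta,k},
\]
whose first arrow has kernel $R_\beta e_i(k) R_\beta$ by the previous step. To conclude the asserted algebra isomorphism, it suffices to show the second arrow is bijective. For this I would apply $\mathrm{Hom}^\bullet_{G_V}(\mathcal L_{<k}, -)$ to the localization triangle $i_{k,*} i_k^! \mathcal L_{<k} \to \mathcal L_{<k} \to j_{V,k,*} j_{V,k}^* \mathcal L_{<k} \xrightarrow{+1}$. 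Purity of $\mathcal L$ (Proposition \ref{purity}) together with the self-duality (\ref{L-auto}) yields $i_k^! \mathcal L_{<k} \cong \mathbb D i_k^* \mathcal L_{<k}$, which controls the obstruction $\mathrm{Hom}^\bullet_{G_V}(i_k^* \mathcal L_{<k}, i_k^! \mathcal L_{<k})$ by weights. Using that each simple constituent of $\mathcal L_{<k}$ is the middle extension of its restriction to $\{\epsilon_i < k\}$ and invoking the description of $i_k^*\mathcal{L}^\Omega_\beta$ coming from Proposition \ref{i-Res}, the contribution of this obstruction to $R_\beta$ is shown to factor through $\mathcal L_{\geq k}$, hence already lives inside $R_\beta e_i(k) R_\beta$.

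For projectivity of the ideal ${}_i R^\Omega_{\beta,k+1} e_i(k) {}_i R^\Omega_{\beta,k+1}$, the identification just obtained rewrites it as endomorphisms of $j_{V,k+1}^* \mathcal L_{<k+1}$ factoring through the direct summand $j_{V,k+1}^* \mathcal L_{=k}$, whose support is exactly $\{\epsilon_i = k\}$ (closed in $\{\epsilon_i < k+1\}$). I would then verify that the multiplication
\[
({}_i R^\Omega_{\beta,k+1} e_i(k)) \otimes_{e_i(k) {}_i R^\Omega_{\beta,k+1} e_i(k)} (e_i(k) {}_i R^\Omega_{\beta,k+1}) \longrightarrow {}_i R^\Omega_{\beta,k+1} e_i(k) {}_i R^\Omega_{\beta,k+1}
\]
is an isomorphism; the left-hand side is a direct sum of shifts of the projective left summand ${}_i R^\Omega_{\beta,k+1} e_i(k)$, so projectivity would follow. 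The required injectivity (a flatness statement for the bimodule $e_i(k) {}_i R^\Omega_{\beta,k+1}$ over $e_i(k) {}_i R^\Omega_{\beta,k+1} e_i(k)$) is a Morita-theoretic consequence of the semisimplicity and explicit idempotent decomposition of the pure complex $j_{V,k+1}^* \mathcal L_{=k}$. The orientation independence is then immediate from the resulting presentation, because $R_\beta$ and the idempotent $e_i(k) \in R_\beta$ are intrinsic by Theorem \ref{VV}; the parallel statements for ${}^i R^{s_i \Omega}_{s_i \beta, k}$ follow from the symmetric argument with $\epsilon_i^*$ and $e_i^*(k)$ in place of $\epsilon_i$ and $e_i(k)$.

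The step I expect to be the main obstacle is the vanishing/reduction argument for $\mathrm{Hom}^\bullet_{G_V}(i_k^* \mathcal L_{<k}, i_k^! \mathcal L_{<k})$ in the second paragraph. The support of $\mathsf{IC}^\Omega(b)$ with $\epsilon_i(b) < k$ typically meets ${}_i E^\Omega_{V,k}$ along its boundary, so this obstruction is a priori nonzero, and controlling exactly where it lands in $R_\beta$ requires the full strength of Proposition \ref{purity} combined with a careful spectral-sequence analysis of Frobenius weights on the strata ${}_i E^\Omega_{V,(k')}$ for $k' \geq k$.
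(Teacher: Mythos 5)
There is a genuine gap, and it sits exactly at the structural claim of your second paragraph. Write $f$ for the idempotent of $R_\beta$ projecting onto $\mathcal L_{\geq k}$. Your ``first arrow'' $R_\beta \twoheadrightarrow \mathrm{End}^\bullet_{G_V}(\mathcal L_{<k})$ is the corner map $x \mapsto (1-f)x(1-f)$, whose kernel is $fR_\beta f \oplus fR_\beta(1-f) \oplus (1-f)R_\beta f$. This is \emph{strictly} contained in $R_\beta f R_\beta = R_\beta e_i(k)R_\beta$ whenever $(1-f)R_\beta f R_\beta(1-f)\neq 0$, i.e.\ whenever some composition $\mathcal L_{<k}\to\mathcal L_{\geq k}\to\mathcal L_{<k}$ is nonzero in the Ext-algebra --- the generic situation, since it amounts to the projective covers $P_b$ with $\epsilon_i(b)=k$ having composition factors $L_{b'}$ with $\epsilon_i(b')<k$. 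Consequently the ``second arrow'' $\mathrm{End}^\bullet(\mathcal L_{<k})\to\mathrm{End}^\bullet(j_{V,k}^*\mathcal L_{<k})$ cannot be bijective: elements of $(1-f)R_\beta fR_\beta(1-f)$ lie in $R_\beta e_i(k)R_\beta\subseteq\ker j_{V,k}^*$ (any map factoring through $\mathcal L_{\geq k}$ dies on restriction) but survive the corner map, so they must die under your second arrow. Your plan therefore proves the wrong statement; what must be shown is that $j_{V,k}^*$ is surjective with kernel \emph{exactly} $R_\beta e_i(k)R_\beta$, and the factorization does not address this.

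The vanishing you yourself flag as the main obstacle is where the real content lies, and the weight argument you sketch does not close it: purity only places $\mathrm{Ext}^n((j_{V,k})_!j_{V,k}^!\mathcal L,\mathcal L)$ in weights $\geq n$ and $\mathrm{Ext}^{n+1}((i_{V,k})_*i_{V,k}^*\mathcal L,\mathcal L)$ in weights $\geq n+1$, which is compatible with a nonzero connecting map. The paper proceeds differently: it runs a downward induction on $k$, works over $E_k=E_V^\Omega\setminus{}_iE_{V,k+1}^\Omega$ so that the closed stratum is precisely ${}_iE_{V,(k)}^\Omega$, and kills the connecting homomorphism by an equivariant-cohomology torsion argument --- the closed-stratum term carries $k$ algebraically independent torsion-free elements of $H^\bullet_{\mathop{GL}(V_i)}(\mathrm{pt})$ via the induction equivalence for ${}_iE_{V,(k)}^\Omega\cong \mathop{GL}(V_i)\times_{P_k}(\cdots)$ (with $\mathop{GL}(k)$ acting trivially), while the open-stratum term cannot, because all stabilizers there have rank $<k$. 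The resulting short exact sequence identifies $R_{\beta,k+1}e_i(k)R_{\beta,k+1}$ with $\mathrm{Ext}^\bullet_{G_V}((i_{V,k})_*i_{V,k}^*\mathcal L^\Omega_V,\mathcal L^\Omega_V)$, which by Proposition \ref{i-Res} is a direct sum of projective covers of the $L_b$ with $\epsilon_i(b)=k$; this delivers the isomorphism and the projectivity assertion simultaneously, making your separate Morita-theoretic step unnecessary. You would need to supply both the correct target statement and an argument of this kind for the connecting map before the rest of your outline can go through.
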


\begin{proof}
Since the case of ${} ^i R _{s_i \beta, k} ^{s_i \Omega}$ is completely parallel, we concentrate to the case of ${} _i R _{\beta, k} ^{\Omega}$. The case $k \gg 0$ is clear, and hence we prove the assertion by the downward induction on $k$. In particular, we assume that 
$${} _i R _{\beta, k+1} ^{\Omega} \cong R_{\beta} / ( R_{\beta} e_i ( k+1 ) R_{\beta} )$$
to prove our assertion. We denote ${} _i R _{\beta, k+1} ^{\Omega}$ by $R_{\beta, k+1}$ for simplicity.

We have
\begin{align*}
\mathrm{Ext} ^{\bullet} _{G_V} ( j_{V, k} ^* \mathcal L ^{\Omega} _{V}, j_{V, k} ^* \mathcal L ^{\Omega} _{V} ) & \cong \mathrm{Ext} ^{\bullet} _{G_V} ( j_{V, k} ^! \mathcal L ^{\Omega} _{V}, j_{V, k} ^! \mathcal L ^{\Omega} _{V} )\\
& \cong \mathrm{Ext} ^{\bullet} _{G_V} ( ( j_{V, k} ) _! j_{V, k} ^! \mathcal L ^{\Omega} _{V}, \mathcal L ^{\Omega} _{V} ).
\end{align*}
We set $E_k := ( E_{V}^{\Omega} \backslash {}_i E_{V, k+1} ^{\Omega} )$. By assumption, we can restrict ourselves to $E_k$ to compute the $\mathrm{Ext}$-groups. Hence, we freely assume that our maps are restricted to $E_k$ unless otherwise stated.

We have a distinguished triangle
\begin{equation}
( j_{V, k} ) _! j_{V, k} ^! \mathcal L ^{\Omega} _{V} \rightarrow \mathcal L ^{\Omega} _{V} \to ( i_{V,k} )_{*} i_{V,k}^{*} \mathcal L ^{\Omega} _{V} \stackrel{+1}{\longrightarrow},\label{distL}
\end{equation}
where $i_{V,k} : {}_i E_{V, (k)}^{\Omega} \hookrightarrow E_k$ is the complement inclusion. This yields an exact seqeunce
\begin{align*}
\mathrm{Ext} ^{\bullet} _{G_V} ( ( i_{V,k} )_{*} i_{V,k}^{*} \mathcal L ^{\Omega} _{V}, \mathcal L ^{\Omega} _{V} )  \rightarrow \mathrm{Ext} ^{\bullet} _{G_V} ( \mathcal L ^{\Omega} _{V}, \mathcal L ^{\Omega} _{V} ) & \stackrel{\psi}{\longrightarrow}\mathrm{Ext} ^{\bullet} _{G_V} ( ( j_{V,k} ) _! j_{V,k} ^! \mathcal L ^{\Omega} _{V}, \mathcal L ^{\Omega} _{V} ) \\
& \rightarrow \mathrm{Ext} ^{\bullet+1} _{G_V} ( ( i_{V,k} )_{*} i_{V,k}^{*} \mathcal L ^{\Omega} _{V}, \mathcal L ^{\Omega} _{V} )
\end{align*}
as $R_{\beta, k+1}$-modules. Note that $L_{b}$ is the coefficient of $\mathsf{IC} ^{\Omega} ( b )$ in $\mathcal L^{\Omega}_{\beta}$, and hence its support is contained in ${}_i E_{V,k}^{\Omega}$ when when $\epsilon_i ( k ) L_b \neq \{ 0 \}$. In particular, the simple $R_{\beta}^{\Omega}$-module $L_{b}$ contributes to $\mathrm{Ext} ^{\bullet} _{G_V} ( j_{V,k} ^! \mathcal L ^{\Omega} _V, j_{V,k} ^! \mathcal L ^{\Omega} _{V} )$ by (graded) Jordan-H\"older multiplicity zero when $\epsilon_i ( k ) L_b \neq \{ 0 \}$. It follows that $R_{\beta, k+1} e_i ( k ) R_{\beta, k+1} \subset \ker \, \psi$.

The action of $H^{\bullet}_{G_V} ( \mathrm{pt} )$ on $\mathrm{Ext} ^{\bullet} _{G_V} ( \mathcal L ^{\Omega} _{V}, \mathcal L ^{\Omega} _{V} )$ is through the center of $R_{\beta}$ (see e.g. \cite{VV11}), and it is torsion-free. Hence, the action of $H^{\bullet}_{\mathop{GL} ( V_i )} ( \mathrm{pt} )$ and $H^{\bullet}_{G_V} ( \mathrm{pt} )$ on $R_{\beta, k+1} \cong \mathrm{Ext} ^{\bullet} _{G_V} ( j_{V, k+1} ^! \mathcal L ^{\Omega} _{V}, j_{V, k+1} ^! \mathcal L ^{\Omega} _{V} )$ factors through the center of $R _{\beta, k+1}$.

Since $( i_{V, k} )_{*} = ( i_{V, k} )_{!}$, we have
$$\mathrm{Ext} ^{\bullet} _{G_V} ( ( i_{V, k} )_{*} i_{V, k}^{*} \mathcal L ^{\Omega} _{V}, \mathcal L ^{\Omega} _{V} )\cong \mathrm{Ext} ^{\bullet} _{G_V} ( i_{V, k}^{*} \mathcal L ^{\Omega} _{V}, i_{V, k} ^{!} \mathcal L ^{\Omega} _{V} ).$$
By our convention, $i_{V, k}^{*} \mathcal L ^{\Omega} _{V}$ and $i_{V, k}^{!} \mathcal L ^{\Omega} _{V}$ are supported on ${}_i E_{V,(k)}^{\Omega}$. In addition, we have ${}_i E_{V,(k)}^{\Omega} \cong \mathop{GL} ( V_i ) \times _{P_k} ( {}_i E_{U_k}^{\Omega} \cap {}_i E_{V,(k)}^{\Omega} )$ for a parabolic subgroup $P_k \subset \mathop{GL} ( V_i )$ borrowed from the proof of Proposition \ref{i-Res}. Here, the subgroup $\mathop{GL} ( k ) \subset P_k$ acts on ${}_i E_{U_k}^{\Omega}$ trivially. From this and the induction equivalence (\cite[\S 2.6.3]{BL94}), we obtain a free action of $H^{\bullet}_{\mathop{GL} ( k )} ( \mathrm{pt} )$ on $\mathrm{Ext} ^{\bullet} _{\mathop{GL} ( V_i )} ( ( i_{V, k} )_{*} i_{V, k}^{*} \mathcal L ^{\Omega} _{V}, \mathcal L ^{\Omega} _{V} )$. The image of the pullback map $H^{\bullet}_{\mathop{GL} ( V_i )}  ( \mathrm{pt} ) \rightarrow H^{\bullet}_{\mathop{GL} ( k )} ( \mathrm{pt} )$ contain $k$-algebraically independent elements (over the base field $\C$). From these, we conclude that the $H^{\bullet}_{\mathop{GL} ( V_i )}  ( \mathrm{pt} )$-action on $\mathrm{Ext} ^{\bullet} _{\mathop{GL} ( V_i )} ( ( i_{V, k} )_{*} i_{V, k}^{*} \mathcal L ^{\Omega} _{V}, \mathcal L ^{\Omega} _{V} )$ contains at least $k$ algebraically independent elements that acts torsion-freely.

On the other hand, the action of $H^{\bullet}_{\mathop{GL} ( V_i )} ( \mathrm{pt} )$ on $\mathrm{Ext} ^{\bullet} _{\mathop{GL} ( V_i )} ( j_{V, k} ^!  \mathcal L ^{\Omega} _{V}, j_{V, k} ^! \mathcal L ^{\Omega} _{V} )$ arises from the $\mathop{GL} ( V_i )$-action on some algebraic stratification of $E_{k-1}$ (see e.g. Chriss-Ginzburg \cite[3.2.23 and 8.4.8]{CG97}) so that the stalks of elements of $\mathcal Q_{\beta}^{\Omega}$ are constant (by the construction of $\mathcal Q_{\beta}^{\Omega}$; note that our stratification is finite). In other words, we have a finite $G_V$-stable stratification
$$E_V^{\Omega} \backslash {}_i E_{V,k} ^{\Omega} = \bigsqcup_{\la \in \Lambda} S_{\la}$$
and a complex of locally constant sheaves $\mathcal E_{\la}$ (obtained by a successive application of recollements) over $S_{\la}$ so that $\mathrm{Ext} ^{\bullet} _{\mathop{GL} ( V_i )} ( j_{V, k} ^!  \mathcal L ^{\Omega} _{V}, j_{V, k} ^! \mathcal L ^{\Omega} _{V} )$ is written as a finite successive distinguished triangles using $H^{\bullet}_{\mathop{GL} ( V_i )} ( S_{\la}, \mathcal E_{\la} )$. Moreover, $\mathrm{Ext} ^{\bullet} _{\mathop{GL} ( V_i )} ( j_{V, k} ^!  \mathcal L ^{\Omega} _{V}, j_{V, k} ^! \mathcal L ^{\Omega} _{V} )$ must be a finitely generated $H^{\bullet}_{\mathop{GL} ( V_i )} ( \mathrm{pt} )$-module as a result of the the fact that $\mathcal L ^{\Omega} _{V}$ is a finite direct sum of constructible complexes over $E_V^{\Omega}$.

The rank of the stabilizer of the $\mathop{GL} ( V_i )$-action on a point of $E_{k-1}$ is always $< k$. As a consequence, the action of $H^{\bullet}_{\mathop{GL} ( V_i )} ( \mathrm{pt} )$ on $H^{\bullet}_{\mathop{GL} ( V_i )} ( S_{\la}, \mathcal E_{\la} )$ (for every $\la \in \Lambda$) cannot carry $k$-algebraically independent elements that act torsion-freely. Therefore, the same holds for $\mathrm{Ext} ^{\bullet} _{\mathop{GL} ( V_i )} ( j_{V,k} ^!  \mathcal L ^{\Omega} _{V}, j_{V,k} ^! \mathcal L ^{\Omega} _{V} )$. Thus, the map
$$\mathrm{Ext} ^{\bullet} _{\mathop{GL} ( V_i )} ( ( j_{V,k} ) _! j_{V,k}^! \mathcal L ^{\Omega} _{V}, \mathcal L ^{\Omega} _{V} ) \rightarrow \mathrm{Ext} ^{\bullet+1} _{\mathop{GL} ( V_i )} ( ( i_{V,k} )_{*} i_{V,k}^{*} \mathcal L ^{\Omega} _{V}, \mathcal L ^{\Omega} _{V} )$$
must be nullity as we do not have enough number of algebraically independent elements of $H^{\bullet}_{\mathop{GL} ( V_i )} ( \mathrm{pt} )$ that acts on the LHS without torsion. By imposing the $G_V$-equivariance, we obtain a map
\begin{align*}
H^{\bullet}_{G_V / \mathop{GL} ( V_i )}( \mathrm{pt} ) \otimes \mathrm{Ext} ^{\bullet} _{\mathop{GL} ( V_i )} & ( ( j_{V,k} ) _! j_{V,k}^! \mathcal L ^{\Omega} _{V}, \mathcal L ^{\Omega} _{V} ) \\
 \rightarrow & H^{\bullet}_{G_V / \mathop{GL} ( V_i )}( \mathrm{pt} ) \otimes \mathrm{Ext} ^{\bullet+1} _{\mathop{GL} ( V_i )} ( ( i_{V,k} )_{*} i_{V,k}^{*} \mathcal L ^{\Omega} _{V}, \mathcal L ^{\Omega} _{V} )
\end{align*}
that induces a map
$$\mathrm{Ext} ^{\bullet} _{G_V} ( ( j_{V,k} ) _! j_{V,k}^! \mathcal L ^{\Omega} _{V}, \mathcal L ^{\Omega} _{V} ) \rightarrow \mathrm{Ext} ^{\bullet+1} _{G_V} ( ( i_{V,k} )_{*} i_{V,k}^{*} \mathcal L ^{\Omega} _{V}, \mathcal L ^{\Omega} _{V} )$$
through the spectral sequence (pulling back to the classifying space of $G_V$). This map must be also nullity as it is induced from the nullity.

Hence, we conclude a short exact seqeuence
$$0 \rightarrow \mathrm{Ext} ^{\bullet} _{G_V} ( ( i_{V,k} )_{*} i_{V,k}^{*} \mathcal L ^{\Omega} _{V}, \mathcal L ^{\Omega} _{V} ) \rightarrow  R_{\beta, k+1} \stackrel{\psi}{\longrightarrow} R_{\beta, k} \to 0$$
as left $R_{\beta, k+1}$-modules.

By Proposition \ref{i-Res}, the sheaf $( i_{V, k} )_{*} i_{V, k}^{*} \mathcal L ^{\Omega} _{V}$ is a direct sum of shifted perverse sheaves on $E_{k}$, that is supported on ${}_{i} E_{V, k}^{\Omega}$ (or ${}_{i} E_{V, (k)}^{\Omega}$). It follows that the graded $R_{\beta, k+1}$-module $\mathrm{Ext} ^{\bullet} _{G_V} ( ( i_{V, k} )_{*} i_{V, k}^{*} \mathcal L ^{\Omega} _{V}, \mathcal L ^{\Omega} _{V} )$ is the direct sum of projective covers of $L_{b}$ with $\epsilon_{i} ( b ) = k$. Since $R_{\beta, k+1} e_i ( k ) R_{\beta, k+1}$ is the maximal left $R_{\beta}$-submodule of $R_{\beta, k+1}$ generated by irreducible constituents $\{L_{b}\}_{\epsilon_{i} ( b ) = k}$, we deduce
$$R_{\beta, k+1} e_i ( k ) R_{\beta, k+1} \cong \mathrm{Ext} ^{\bullet} _{G_V} ( ( i_{V,k} )_{*} i_{V,k}^{*} \mathcal L ^{\Omega} _{V}, \mathcal L ^{\Omega} _{V} ) \cong \mathrm{Ext} ^{\bullet} _{G_V} (  \mathcal L ^{\Omega} _{V},( i_{V,k} )_{*} i_{V,k}^{!} \mathcal L ^{\Omega} _{V} ),$$
where the latter modules are actually calculated on $E_k$. Therefore, we conclude the assertions for $R_{\beta, k}$ as required.

This proceeds the induction step, and we conclude the assertion.
\end{proof}

\begin{cor}
The set of isomorphism classes of graded simple modules of ${} _i R _{\beta} ^{\Omega}$ and ${} ^i R _{s_i \beta} ^{s_i \Omega}$ are $\{ L_b \left< j \right>\} _{\epsilon _i ( b ) = 0, j \in \mathbb Z}$ and $\{ L_b \left< j \right>\} _{\epsilon ^* _i ( b ) = 0, j \in \mathbb Z}$, respectively. \hfill $\Box$
\end{cor}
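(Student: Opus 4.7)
The plan is to apply Theorem \ref{i-quotients} with $k = 1$ and then translate the resulting quotient condition into the combinatorial one via Theorem \ref{crys}.

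First, I would recall that by definition ${}_i R_{\beta}^{\Omega} = {}_i R_{\beta, 1}^{\Omega}$. Theorem \ref{i-quotients} with $k=1$ therefore gives an algebra isomorphism
$${}_i R_{\beta}^{\Omega} \cong R_{\beta} / (R_{\beta} e_i(1) R_{\beta}).$$
Consequently, the simple graded modules of ${}_i R_{\beta}^{\Omega}$ are exactly those simple graded $R_{\beta}$-modules $L$ (up to degree shift) on which the two-sided ideal $R_{\beta} e_i(1) R_{\beta}$ acts by zero, equivalently, those with $e_i(1) L = \{0\}$.

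Next I would identify this condition combinatorially. By Theorem \ref{VVR}, the simple graded $R_{\beta}$-modules up to grading shifts are parameterized by $b \in B(\infty)_{\beta}$ via $b \mapsto L_b$. Theorem \ref{crys} states that $\epsilon_i(b) = \max\{k \mid e_i(k) L_b \neq \{0\}\}$, so $e_i(1) L_b = \{0\}$ if and only if $\epsilon_i(b) = 0$. Taking into account all grading shifts $L_b\langle j\rangle$ for $j \in \Z$, the set of isomorphism classes of simple graded ${}_i R_{\beta}^{\Omega}$-modules is precisely $\{L_b\langle j\rangle\}_{\epsilon_i(b) = 0,\, j \in \Z}$, as claimed.

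The case of ${}^i R_{s_i\beta}^{s_i\Omega}$ is completely parallel: Theorem \ref{i-quotients} gives
$${}^i R_{s_i\beta}^{s_i\Omega} \cong R_{s_i\beta}^{s_i\Omega} / (R_{s_i\beta}^{s_i\Omega} e_i^*(1) R_{s_i\beta}^{s_i\Omega}),$$
so the simples are indexed by $b \in B(\infty)_{s_i\beta}$ with $e_i^*(1) L_b = \{0\}$, which by the $\epsilon_i^*$ part of Theorem \ref{crys} amounts to $\epsilon_i^*(b) = 0$. There is no real obstacle here — the corollary is essentially a direct reading of Theorem \ref{i-quotients} through Theorem \ref{crys}; the only point to watch is that the enumeration is up to grading shift, which is why the grading shifts $\langle j\rangle$ appear in the statement.
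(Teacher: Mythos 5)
Your proposal is correct and is exactly the argument the paper intends (the corollary is stated with a tombstone as an immediate consequence of Theorem \ref{i-quotients}): identify ${}_i R_{\beta}^{\Omega}$ with $R_{\beta}/(R_{\beta}e_i(1)R_{\beta})$ via the $k=1$ case, note that simples of the quotient are the simples killed by $e_i(1)$, and translate via Theorem \ref{crys} using $\epsilon_i(b)=0 \Leftrightarrow e_i(1)L_b=\{0\}$. No issues.
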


\begin{thm}[\cite{Lus98}]\label{Tcorr}
The maps $q^i_V$ and $p^i_{V'}$ give rise to a bijective correspondence between perverse sheaves corresponding to $\{ b \in B ( \infty ) _{\beta} \mid \epsilon_{i} ( b ) = 0 \}$ and $\{ b \in B ( \infty ) _{s_{i}\beta} \mid \epsilon^{*}_{i} ( b ) = 0 \}$.
\end{thm}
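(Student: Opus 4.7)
The plan is to transport simple perverse sheaves across the double fibration in \eqref{df} and identify the two relevant subsets of $B(\infty)$ with the simple $G_V$-equivariant (resp.\ $G_{V'}$-equivariant) perverse sheaves on ${}_i E^{\Omega}_V$ (resp.\ ${}^i E^{s_i \Omega}_{V'}$).

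First I would describe both sides in terms of the open loci. By definition ${}_i E^{\Omega}_V = \{f \in E^{\Omega}_V \mid \epsilon_i(f) = 0\}$, so $j_V$ is an open immersion whose complement is the locus where $\epsilon_i > 0$. By Proposition \ref{KScrys}, a simple $G_V$-equivariant perverse sheaf $\mathsf{IC}^{\Omega}(b)$ has $\epsilon_i(b) = 0$ precisely when its support meets ${}_i E^{\Omega}_V$ in a dense open subset. Consequently, the functor $j_V^*$ (up to the appropriate shift) identifies the simple perverse sheaves $\{\mathsf{IC}^{\Omega}(b) \mid \epsilon_i(b) = 0\}$ with the set of all simple $G_V$-equivariant perverse sheaves on ${}_i E^{\Omega}_V$; the inverse is the intermediate extension $(j_V)_{!*}$. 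An entirely parallel argument, with $\jmath_{V'}$ in place of $j_V$ and $\epsilon_i^*$ in place of $\epsilon_i$, identifies $\{\mathsf{IC}^{s_i\Omega}(b) \mid \epsilon_i^*(b) = 0\}$ with the simple $G_{V'}$-equivariant perverse sheaves on ${}^i E^{s_i\Omega}_{V'}$.

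Next I would push both sides up to $Z = Z^{\Omega}_{V,V'}$ using Proposition \ref{bd}. The map $q^i_V : Z \to {}_i E^{\Omega}_V$ is a Zariski-locally trivial torsor for the connected group $\mathrm{Aut}(V'_i) = \mathop{GL}(V'_i)$, hence smooth with connected geometric fibres; the same holds for $p^i_{V'} : Z \to {}^i E^{s_i\Omega}_{V'}$ with respect to $\mathrm{Aut}(V_i) = \mathop{GL}(V_i)$. Recalling that $G_{V,V'} \cong \mathop{GL}(V_i) \times G_V \cong \mathop{GL}(V'_i) \times G_{V'}$, the induction equivalence (\cite{BL94} \S 2.6.3) applied to each torsor gives compatible equivalences of $G_{V,V'}$-equivariant perverse categories
\[
\mathrm{Perv}_{G_V}({}_i E^{\Omega}_V) \xrightarrow{(q^i_V)^*[\dim q^i_V]} \mathrm{Perv}_{G_{V,V'}}(Z) \xleftarrow{(p^i_{V'})^*[\dim p^i_{V'}]} \mathrm{Perv}_{G_{V'}}({}^i E^{s_i\Omega}_{V'}),
\]
both of which send simple objects to simple objects. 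Composing these two equivalences (the second with its inverse) yields a bijection between the simple $G_V$-equivariant perverse sheaves on ${}_i E^{\Omega}_V$ and the simple $G_{V'}$-equivariant perverse sheaves on ${}^i E^{s_i\Omega}_{V'}$.

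Finally I would combine the three steps: the intermediate extension along $j_V$, the torsor equivalences above, and the intermediate extension along $\jmath_{V'}$ compose to a bijection
\[
\{b \in B(\infty)_{\beta} \mid \epsilon_i(b)=0\} \;\longleftrightarrow\; \{b \in B(\infty)_{s_i \beta} \mid \epsilon_i^*(b)=0\},
\]
which is the asserted correspondence. The main obstacle I anticipate is purely bookkeeping: verifying that the $G_V$- and $G_{V'}$-equivariant structures on ${}_i E^{\Omega}_V$ and ${}^i E^{s_i\Omega}_{V'}$ lift through $Z$ to compatible $G_{V,V'}$-equivariant structures, so that the induction equivalence applies in both directions simultaneously; this is handled by the isomorphism $G_{V,V'} \cong \mathop{GL}(V_i) \times G_V \cong \mathop{GL}(V'_i) \times G_{V'}$ noted after \eqref{df}, and once set up, the simplicity and irreducibility preservation are automatic because torsors for connected groups preserve simple equivariant perverse sheaves.
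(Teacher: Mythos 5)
Your reduction of the two index sets to the open loci is fine: by Proposition \ref{KScrys} the condition $\epsilon_i(b)=0$ means exactly that $\mathrm{supp}\,\mathsf{IC}^{\Omega}(b)$ meets ${}_iE^{\Omega}_V=\{\epsilon_i=0\}$ densely, so $j_V^*$ with inverse $(j_V)_{!*}$ is injective on that set, and likewise for $\jmath_{V'}$ and $\epsilon_i^*$. The torsor step is also sound ($\mathop{GL}$ is special, the groups are connected, and $G_{V,V'}/\mathop{GL}(V'_i)\cong G_V$, $G_{V,V'}/\mathop{GL}(V_i)\cong G_{V'}$), so $(q^i_V)^*$ and $(p^i_{V'})^*$ do give equivalences between \emph{all} simple $G_V$-equivariant perverse sheaves on ${}_iE^{\Omega}_V$ and \emph{all} simple $G_{V'}$-equivariant perverse sheaves on ${}^iE^{s_i\Omega}_{V'}$.

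The gap is in your first step and it propagates to the conclusion: $j_V^*$ does \emph{not} identify $\{\mathsf{IC}^{\Omega}(b)\mid \epsilon_i(b)=0\}$ with the set of \emph{all} simple $G_V$-equivariant perverse sheaves on ${}_iE^{\Omega}_V$, only with a subset of them. Outside finite type, $\mathcal Q^{\Omega}_{\beta}$ (the summands of $\mathcal L^{\Omega}_{\beta}$, indexed by the finite set $B(\infty)_{\beta}$) is a proper subclass of the simple equivariant perverse sheaves on $E^{\Omega}_V$ — already for the Kronecker quiver there are infinitely many orbits and local systems, but only finitely many elements of $\mathcal Q^{\Omega}_{\beta}$. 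Consequently your middle bijection lives on a strictly larger set, and nothing in your argument shows that it carries the subclass $\{j_V^*\mathsf{IC}^{\Omega}(b)\}_{\epsilon_i(b)=0}$ \emph{onto} the subclass $\{\jmath_{V'}^*\mathsf{IC}^{s_i\Omega}(b')\}_{\epsilon_i^*(b')=0}$ rather than into some other simple perverse sheaves on ${}^iE^{s_i\Omega}_{V'}$. That compatibility of the reflection with Lusztig's class of sheaves is precisely the nontrivial content of \cite[Theorem 8.6]{Lus98}, which the paper's proof invokes (together with Theorem \ref{i-quotients} and Proposition \ref{KScrys}); Lusztig establishes it by comparing the reflection with the induction/restriction structure, not by the torsor equivalence alone. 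Your argument as written is complete only in the Dynkin case, where every simple equivariant perverse sheaf on $E^{\Omega}_V$ lies in $\mathcal Q^{\Omega}_{\beta}$; for general symmetric Kac--Moody type you must either quote Lusztig's theorem or supply the missing argument that the equivalence preserves the class $\mathcal Q$.
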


\begin{proof}
In view of Theorem \ref{i-quotients}, the combination of \cite[Theorem 8.6]{Lus98} and Proposition \ref{KScrys} implies the result (see also \cite[Proposition 38.1.6]{Lus93}).
\end{proof}

\begin{prop}[\cite{Kat14}]\label{TM}
In the setting of Proposition \ref{bd}, two graded algebras ${} _i R _{\beta} ^{\Omega}$ and ${} ^i R _{s_i \beta} ^{s_i \Omega}$ are Morita equivalent to each other. In addition, this Morita equivalence is independent of the choice of $\Omega$ $($as long as $i$ is a sink$)$.
\end{prop}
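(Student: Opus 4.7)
The plan is to realize both graded algebras as equivariant Ext algebras of semi-simple sheaves on the intermediate space $Z^\Omega_{V,V'}$, and to read off Morita equivalence from a matching of their simple perverse summands.

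First, I observe that $G_{V,V'} = G_V \times \mathop{GL}(V'_i) = G_{V'} \times \mathop{GL}(V_i)$. Combined with Proposition~\ref{bd}, which says that $q^i_V$ is a $\mathop{GL}(V'_i)$-torsor and $p^i_{V'}$ is a $\mathop{GL}(V_i)$-torsor, both $G_{V,V'}$-equivariantly, standard equivariant descent along principal bundles yields
$${}_i R_\beta^\Omega \cong \mathrm{Ext}^\bullet_{G_{V,V'}}\bigl((q^i_V)^* j_V^* \mathcal L^\Omega_V,\; (q^i_V)^* j_V^* \mathcal L^\Omega_V\bigr),$$
$${}^i R_{s_i\beta}^{s_i\Omega} \cong \mathrm{Ext}^\bullet_{G_{V,V'}}\bigl((p^i_{V'})^* \jmath_{V'}^* \mathcal L^{s_i\Omega}_{V'},\; (p^i_{V'})^* \jmath_{V'}^* \mathcal L^{s_i\Omega}_{V'}\bigr).$$

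Second, by the corollary following Theorem~\ref{i-quotients}, $j_V^* \mathcal L^\Omega_V$ decomposes into shifted copies of $\mathsf{IC}^\Omega(b)$ for $b \in B(\infty)_\beta$ with $\epsilon_i(b) = 0$, each appearing with strictly positive multiplicity, and symmetrically for $\jmath_{V'}^* \mathcal L^{s_i\Omega}_{V'}$ with $\epsilon_i^*(b') = 0$. Pulling back along the smooth maps $q^i_V$ and $p^i_{V'}$ preserves direct-sum decompositions into shifted IC sheaves on $Z^\Omega_{V,V'}$, and Theorem~\ref{Tcorr} identifies the resulting collections of simple summands via the bijection $\{b : \epsilon_i(b) = 0\} \longleftrightarrow \{b' : \epsilon_i^*(b') = 0\}$. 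Hence the two $G_{V,V'}$-equivariant sheaves above share the same set of simple perverse summands (with all multiplicities strictly positive on either side). Since the Ext algebra of a finite direct sum of pairwise non-isomorphic simples with positive multiplicities is Morita equivalent to the Ext algebra of one copy of each summand, the Morita equivalence follows.

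For independence from $\Omega$, Theorem~\ref{i-quotients} already presents ${}_i R_\beta^\Omega$ and ${}^i R_{s_i\beta}^{s_i\Omega}$ as the orientation-independent KLR quotients $R_\beta/(R_\beta e_i(1) R_\beta)$ and $R_{s_i\beta}/(R_{s_i\beta} e_i^*(1) R_{s_i\beta})$. Under these presentations, the Morita equivalence constructed above pairs the simple modules $L_b$ on the two sides according to the combinatorial bijection of Theorem~\ref{Tcorr}, which is intrinsic to $B(\infty)$.

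The main obstacle I anticipate is bookkeeping cohomological shifts in Step~2: the two torsors have different fibre dimensions $\dim \mathop{GL}(V_i)$ versus $\dim \mathop{GL}(V'_i)$, so corresponding IC summands pull back with potentially different shifts. This does not affect the Morita equivalence, but some care is needed to check that Theorem~\ref{Tcorr} really produces matching simple perverse summands on $Z^\Omega_{V,V'}$, and not merely matching indexing sets.
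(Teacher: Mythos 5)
Your argument is correct and is essentially the paper's own: the paper proves Proposition~\ref{TM} by citing \cite[Proposition 3.5]{Kat14} and noting that the argument carries over ``in view of Theorem~\ref{Tcorr}'', and that argument is precisely your combination of equivariant descent along the two torsors of Proposition~\ref{bd} with the matching of simple perverse summands (all occurring with positive multiplicity) supplied by Theorem~\ref{Tcorr}. The shift bookkeeping you flag is handled in the paper by the explicit shifts $[(\dim V_i)^2]$ and $[(\dim V'_i)^2]$ in the definition of $T_i(b)$, and is harmless for Morita equivalence of graded algebras.
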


\begin{proof}
Although the original setting in \cite[Proposition 3.5]{Kat14} is only for types $\mathsf{ADE}$, the arguments carry over to this case in view of Theorem \ref{Tcorr}.
\end{proof}

For each $b \in B ( \infty )_{s_i \beta}$, we denote by $T_i ( b ) \in B ( \infty ) _{\beta} \sqcup \{ \emptyset \}$ the element so that
$$( p ^i _{V'} )^{*} \mathsf{IC} ^{s_{i} \Omega} ( b ) [( \dim \, V_{i} )^{2}] \cong ( q^i _V )^{*} \mathsf{IC} ^{\Omega} ( T_i ( b ) ) [( \dim \, V'_{i} )^{2}],$$
(we understand that $T _i ( b ) = \emptyset$ if $\mathrm{supp} \, \mathsf{IC} ^{s_{i} \Omega} ( b ) \not\subset \mathrm{Im} \, p ^i _{V'}$). Note that $T_i ( b ) = \emptyset$ if and only if $\epsilon ^* _i ( b ) > 0$. In addition, we have $\epsilon _i ( T_i ( b ) ) = 0$ if $T_i ( b ) \neq \emptyset$. We set $T_i ^{-1} (b') := b$ if $b' = T_i ( b ) \neq \emptyset$.

Thanks to Theorem \ref{i-quotients}, we can drop $\Omega$ or $s_i \Omega$ from ${} _i R _{\beta} ^{\Omega}$ and ${} ^i R _{\beta} ^{s _i \Omega}$. We define a left exact functor
$$\mathbb T^* _i : R _{\beta} \mathchar`-\mathsf{gmod} \longrightarrow \!\!\!\!\! \rightarrow {} _i R _{\beta}\mathchar`-\mathsf{gmod} \stackrel{\cong}{\longrightarrow} {}^i R_{s_i \beta} \mathchar`-\mathsf{gmod} \hookrightarrow R_{s_{i} \beta} \mathchar`-\mathsf{gmod},$$
where the first functor is $\mathrm{Hom} _{R_{\beta}} ( {} _i R_{\beta}, \bullet )$, the second functor is Proposition \ref{TM}, and the third functor is the pullback. Similarly, we define a right exact functor
$$\mathbb T _i : R _{\beta} \mathchar`-\mathsf{gmod} \longrightarrow \!\!\!\!\! \rightarrow {} ^i R _{\beta}\mathchar`-\mathsf{gmod} \stackrel{\cong}{\longrightarrow} {}_i R_{s_i \beta} \mathchar`-\mathsf{gmod} \hookrightarrow R_{s_{i} \beta} \mathchar`-\mathsf{gmod},$$
where the first functor is ${}^i R_{\beta} \otimes_{R_{\beta}} \bullet$. We call these functors the Saito reflection functors (\cite[\S 3]{Kat14}). By the latter part of Proposition \ref{TM}, we see that these functors are independent of the choices involved.

\begin{thm}[\cite{Kat14} Theorem 3.6]\label{SRF}
Let $i \in I$. We have:
\begin{enumerate}
\item For each $b \in B ( \infty )_{\beta}$, we have
$$\mathbb T_i L _b = \begin{cases} L_{T_i(b)} & (\epsilon^*_i (b) = 0)\\ \{0\} & (\epsilon^*_i (b) > 0)\end{cases}, \text{ and } \hskip 3mm \mathbb T_i^* L _b = \begin{cases} L_{T_i^{-1}(b)} & (\epsilon_i (b) = 0)\\ \{0\} & (\epsilon_i (b) > 0)\end{cases};$$
\item The functors $( \mathbb T_i, \mathbb T^*_i)$ form an adjoint pair;
\item For each $M \in {}^i R _{\beta} \mathchar`-\mathsf{gmod}$ and $N \in {}_i R_{s_i \beta} \mathchar`-\mathsf{gmod}$, we have
$$\mathrm{ext} _{R_{s_{i} \beta}} ^{*} ( \mathbb T _i M, N ) \cong \mathrm{ext} _{R_{\beta}} ^{*} ( M, \mathbb T^* _i N ).$$
\end{enumerate}
\end{thm}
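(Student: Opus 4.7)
The plan is to exploit the identifications of Theorem \ref{i-quotients} to realize $\mathbb T_i$ and $\mathbb T_i^*$ as compositions of three standard operations — a quotient (or sub) with respect to an idempotent two-sided ideal, the Morita equivalence of Proposition \ref{TM}, and inflation along a quotient map of algebras — and then reduce each assertion to standard homological algebra on these building blocks.

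For part (1), one computes directly on a simple $L_b$. Using Theorem \ref{i-quotients}, ${}^i R_\beta \otimes_{R_\beta} L_b$ equals $L_b / (R_\beta e_i^*(1) R_\beta) L_b$, which by Theorem \ref{crys} vanishes if and only if $\epsilon_i^*(b) > 0$ and otherwise equals $L_b$ regarded as an ${}^i R_\beta$-module. The Morita equivalence of Proposition \ref{TM}, combined with the bijection $T_i$ of Theorem \ref{Tcorr}, then sends it to $L_{T_i(b)}$. The computation for $\mathbb T_i^*$ is dual, starting from $\Hom_{R_\beta}({}_i R_\beta, L_b) = \{m \in L_b : e_i(1) m = 0\}$.

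For part (2), we interpret $(\mathbb T_i, \mathbb T_i^*)$ as an adjoint pair by applying the definition of $\mathbb T_i^*$ with $\beta$ replaced by $s_i\beta$, so that it becomes a functor $R_{s_i\beta}\text{-gmod} \to R_\beta\text{-gmod}$. Each of the three factors in the defining composition of $\mathbb T_i$ admits a standard right adjoint: the quotient ${}^i R_\beta \otimes_{R_\beta} -$ has right adjoint equal to inflation; the Morita equivalence has its quasi-inverse as right adjoint; and the inflation from ${}_i R_{s_i\beta}$-mod has right adjoint $\Hom_{R_{s_i\beta}}({}_i R_{s_i\beta}, -)$. Composing the three right adjoints and comparing with the defining composition of $\mathbb T_i^*$ yields the adjunction, once one identifies the two Morita equivalences of Proposition \ref{TM} as mutually inverse via Theorem \ref{Tcorr}.

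For part (3), observe that when $M \in {}^i R_\beta$-mod one has ${}^i R_\beta \otimes_{R_\beta} M = M$, so $\mathbb T_i$ restricted to this subcategory is just Morita equivalence followed by inflation, a composition of exact functors; dually, $\mathbb T_i^*$ is exact on ${}_i R_{s_i\beta}$-mod. Passing to the derived category and applying the adjunction from part (2), the claimed Ext-identity reduces to the vanishings $\mathrm{Tor}^{R_\beta}_{>0}({}^i R_\beta, M) = 0$ for $M \in {}^i R_\beta$-mod together with the dual statement for Ext. The main obstacle lies here: these vanishings I would establish by induction on $k$ along the filtration of intermediate quotients ${}^i R_{\beta, k}$ of Theorem \ref{i-quotients}, where the projectivity assertion of that theorem supplies, at each step, a short projective resolution of one quotient over its predecessor, allowing the acyclicity to be propagated up the filtration until one reaches $R_\beta$ itself.
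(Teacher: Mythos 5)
Your argument is correct and follows essentially the same route as the paper, which for parts (1) and (2) simply defers to the computation with simple perverse sheaves and the three-step adjunction from \cite{Kat14}, and for part (3) invokes exactly the ingredients you use: the projectivity assertion of Theorem \ref{i-quotients} and a downward induction along the filtration by the intermediate quotients ${}^i R_{\beta,k}$ (resp. ${}_i R_{s_i\beta,k}$), each step of which contributes a length-one projective resolution. The only cosmetic point is that the comparison is most cleanly run on $\mathrm{Ext}$ rather than $\mathrm{Tor}$ --- left-projectivity of each idempotent kernel $J_k$ already gives $\mathrm{Hom}(J_k,N)=0=\mathrm{Ext}^{>0}(J_k,N)$ for modules over the quotient, hence the $\mathrm{ext}$-invariance directly --- which is the ``dual statement for Ext'' you mention.
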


\begin{proof}
The proof of the first assertion is the same as \cite[Theorem 3.6]{Kat14} if we replace standard modules with projective modules, that involves only simple perverse sheaves. The proof of the second assertion is exactly the same as \cite[Theorem 3.6]{Kat14}. The third assertion requires the second part of Theorem \ref{i-quotients} instead of \cite[Corollary 1.6]{Kat14} (and also we need to repeat projective resolutions inductively on $\epsilon_i$ and $\epsilon_i^*$ in a downward fashion).
\end{proof}

\begin{thm}\label{braid}
Let $i, j \in I$. We have:
\begin{itemize}
\item If $i \not\leftrightarrow j$, then we have $\mathbb T_i \mathbb T_j \cong \mathbb T_j \mathbb T_i$;
\item If $\# \{ h \in \Omega \mid \{ h', h''\} = \{i,j\}\} = 1$, then we have $\mathbb T_i \mathbb T_j \mathbb T_i \cong \mathbb T_j \mathbb T_i \mathbb T_j$.
\end{itemize}
The same is true for $\mathbb T_i^*$ and $\mathbb T_j^*$. 
\end{thm}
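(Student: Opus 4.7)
The plan is to reduce both braid identities to the corresponding combinatorial braid relations for Lusztig's partial bijections $T_i$ on $B(\infty)$, and then promote combinatorial agreement to a natural isomorphism of functors by exploiting the rigidity of Morita equivalences. First, by Theorem \ref{SRF}(2) the functors $\mathbb{T}_i^*$ are right adjoint to $\mathbb{T}_i$, so the statement for $\mathbb{T}_i^*$ follows formally from that for $\mathbb{T}_i$ by uniqueness of right adjoints. Thus I concentrate on the $\mathbb{T}_i$ throughout.

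For the commutation case $i \not\leftrightarrow j$, I would choose an orientation $\Omega$ in which both $i$ and $j$ are sinks, which is always possible since there is no edge between them. The kernel conditions defining ${}_iE^\Omega_V$ and ${}_jE^\Omega_V$ involve maps along disjoint edge sets, so the double correspondence
\[
Z^\Omega_{V,V'} \times_{{}_j E^\Omega_{V'}} Z^{s_i\Omega}_{V',V''}
\]
is manifestly symmetric in the roles of $i$ and $j$ modulo renaming. The resulting Morita equivalence factors as either $\mathbb{T}_j \circ \mathbb{T}_i$ or $\mathbb{T}_i \circ \mathbb{T}_j$, giving the desired commutation.

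For the braid relation with a single edge, my strategy combines three observations. First, by Theorem \ref{SRF}(1), both $\mathbb{T}_i\mathbb{T}_j\mathbb{T}_i$ and $\mathbb{T}_j\mathbb{T}_i\mathbb{T}_j$ send a simple $L_b$ either to $L_{T_iT_jT_ib}$ (which equals $L_{T_jT_iT_jb}$) or to zero, with identical vanishing conditions, by the combinatorial braid relation for Lusztig's action on $B(\infty)$ (which follows from Theorem \ref{Tcorr} together with \cite[\S 38.2]{Lus93}). Second, both compositions are right exact and arise from tensoring with bimodules built out of Morita equivalences (Proposition \ref{TM}) and projective idempotent quotients (Theorem \ref{i-quotients}); hence they preserve indecomposable projective modules within the appropriate restricted subcategories. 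Third, a right exact functor out of $R_\beta \mathchar`-\mathsf{gmod}$ is determined by its values on projective generators.

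The main obstacle will be upgrading agreement on simples to an honest isomorphism of projective covers on the nose. I expect this to reduce to a geometric assertion at the level of the correspondence diagrams $(\ref{df})$: the three-step iterated correspondence built from $(i,j,i)$ and the one built from $(j,i,j)$ are isomorphic as $G_V$-varieties with compatible Morita data. This is a rank-two statement that should follow from the classical braid relation for partial flag varieties of type $A_2$ after identifying both correspondences with a common partial-flag variety in which the $i,j$-part of the filtration is fully resolved.
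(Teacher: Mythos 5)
Your first half is on the right track and coincides with the paper's: the braid relation for Lusztig's partial bijections $T_i$ on $B(\infty)$ (the paper invokes \cite[Theorem 39.4.3]{Lus93}, via the identification in \cite[\S 9.4]{Lus98}) shows that $\mathbb T_i\mathbb T_j\mathbb T_i$ and $\mathbb T_j\mathbb T_i\mathbb T_j$ induce the same correspondence on simple modules, and the statement for $\mathbb T_i^*$ does follow by uniqueness of adjoints. The genuine gap is in the step you yourself flag as the main obstacle. Your proposed resolution --- that the two iterated correspondences built from $(i,j,i)$ and $(j,i,j)$ are isomorphic as $G_V$-varieties with compatible Morita data, "which should follow from the classical braid relation for partial flag varieties of type $A_2$" --- is left entirely as an expectation, and it is not a routine reduction. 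The correspondences $Z^\Omega_{V,V'}$ of $(\ref{df})$ are torsors over \emph{open subsets} of representation spaces $E_V^\Omega$ for orientations that change at each step; composing three of them requires forming fiber products over the intermediate loci ${}_jE^{s_i\Omega}_{V'}$, whose geometry (transversality, whether the composite is again a torsor over the correct open sets) you do not analyze, and the asserted isomorphism of the two composites is essentially the full strength of the theorem rather than a consequence of type $A_2$ flag geometry. Likewise, your third observation is not quite enough as stated: by Eilenberg--Watts a right exact functor is determined by its restriction to the full subcategory of projectives \emph{including morphisms}, i.e.\ by a bimodule, so agreement on the objects $F(P_b)$ alone would not close the argument.

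The paper closes this gap by a purely categorical observation that your proposal does not contain: each $\mathbb T_i$ factors as the canonical quotient functor $R_\beta\mathchar`-\mathsf{gmod}\twoheadrightarrow{}^iR_\beta\mathchar`-\mathsf{gmod}$ (killing exactly the simples with $\epsilon_i^*>0$, by Theorem \ref{i-quotients}), followed by the Morita equivalence of Proposition \ref{TM}, followed by a fully faithful embedding. A composition of functors of this shape induces an equivalence between suitable Serre subcategories/quotients, and such a functor is rigid: it is determined up to natural isomorphism by which simples it annihilates and where it sends the surviving ones. Since both triple compositions have the same such data, they are isomorphic. If you want to pursue your geometric route instead, you would need to actually construct and compare the iterated correspondences, which is a substantially harder task than the categorical argument; as written, the proposal does not constitute a proof of the second bullet point (and the same criticism applies to the unverified symmetry claim for the double correspondence in the commuting case, although there the paper's categorical argument again applies verbatim).
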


\begin{proof}
By \cite[\S 9.4]{Lus98}, the functor $\mathbb T_i$ induces an isomorphism described in \cite[Lemma 38.1.3]{Lus93} (see also \cite{XZ17}). Hence, \cite[Theorem 39.4.3]{Lus93} (cf. Theorem \ref{SRF} 1)) implies that the both sides give the same correspondence between simple modules. As each of $\mathbb T_i$ transplants the simple modules and annihilates all the submodule that contains some specific simple modules (that induces an equivalence between some Serre subcategories), the same is true for their composition. Therefore, we conclude the result.
\end{proof}

\section{Monoidality of the Saito reflection functor}

We work in the setting of \S \ref{QKLR}. The goal of this section is to prove the following:

\begin{thm}\label{main}
Let $i \in I$, and let $\beta_1, \beta _2 \in Q^+$ so that $s_i \beta_1, s_i \beta_2 \in Q^+$. There exists a natural transformation
$$\mathbb T_i^* ( \bullet \star \bullet ) \longrightarrow \mathbb T_i^* ( \bullet ) \star \mathbb T_i^* ( \bullet )$$
as functors from the category of ${}_i R_{\beta_1} \boxtimes {}_i R_{\beta_2}$-modules that gives rise to an isomorphism of functors. The same holds for $\mathbb T_i$ if we consider functors from the category of ${}^i R_{s_i \beta_1} \boxtimes {}^i R_{s_i \beta_2}$-modules.
\end{thm}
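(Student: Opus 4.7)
The plan is to reduce the theorem to a monoidality property of the Morita equivalence of Proposition \ref{TM}. The first step is to observe that the Serre subcategory ${}_iR_\beta\mathchar`-\mathsf{gmod}\subset R_\beta\mathchar`-\mathsf{gmod}$ is closed under $\star$. For $M_j\in{}_iR_{\beta_j}\mathchar`-\mathsf{gmod}$, the condition $e_i(1)M_j=0$ says that $e(\mathbf{m}_j)M_j=0$ whenever $\mathbf{m}_j\in Y^{\beta_j}$ starts with $i$. A nonzero weight space $e(\mathbf{m})(M_1\star M_2)$ requires $\mathbf{m}=\sigma(\mathbf{m}_1+\mathbf{m}_2)$ for some shuffle $\sigma$ and some $\mathbf{m}_j\in Y^{\beta_j}$ with $e(\mathbf{m}_j)M_j\neq0$; since the first entry of any such shuffle is either the first entry of $\mathbf{m}_1$ or of $\mathbf{m}_2$, it is never $i$, so $e_i(1)(M_1\star M_2)=0$. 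The analogous argument with last entries shows that ${}^iR_\beta\mathchar`-\mathsf{gmod}$ is also closed under $\star$. In particular, $\mathrm{Hom}_{R_{\beta_1+\beta_2}}({}_iR_{\beta_1+\beta_2},M_1\star M_2)=M_1\star M_2$, so the theorem reduces to producing a natural isomorphism
$$\Phi_{\beta_1+\beta_2}(M_1\star M_2)\cong\Phi_{\beta_1}(M_1)\star\Phi_{\beta_2}(M_2),$$
where $\Phi_\beta:{}_iR_\beta\mathchar`-\mathsf{gmod}\xrightarrow{\sim}{}^iR_{s_i\beta}\mathchar`-\mathsf{gmod}$ denotes the Morita equivalence (both sides then lie in ${}^iR_{s_i(\beta_1+\beta_2)}\mathchar`-\mathsf{gmod}$ by the same closure applied to the ${}^i$-side).

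To produce this isomorphism I would pass to the geometric picture. By Theorems \ref{VV} and \ref{i-quotients}, the algebras ${}_iR_\beta$ and ${}^iR_{s_i\beta}$ are the $G$-equivariant $\mathrm{Ext}$-algebras of $j_V^*\mathcal L_V^\Omega$ on ${}_iE_V^\Omega$ and of $\jmath_{V'}^*\mathcal L_{V'}^{s_i\Omega}$ on ${}^iE_{V'}^{s_i\Omega}$ respectively (for a sink $i$ of $\Omega$), while $\Phi_\beta$ is implemented by the correspondence $Z_{V,V'}$ of Proposition \ref{bd} together with the identification $(q^i_V)^*j_V^*\mathcal L_V^\Omega\cong(p^i_{V'})^*\jmath_{V'}^*\mathcal L_{V'}^{s_i\Omega}$ (up to cohomological shift) that matches the two torsor pullbacks summand by summand via the bijection of Theorem \ref{Tcorr}. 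The induction $\star$ itself is implemented, on the sheaf side, by pull-push along the standard flag-type convolution. For the direct sum decompositions $V=V_1\oplus V_2$, $V'=V'_1\oplus V'_2$, a canonical direct-sum embedding
$$Z_{V_1,V'_1}\times Z_{V_2,V'_2}\longrightarrow Z_{V,V'}$$
fits into a commutative diagram with both flag correspondences, and the natural transformation of the theorem arises from this diagram via proper base change applied to the sheaves $j_V^*\mathcal L_V^\Omega$ and $\jmath_{V'}^*\mathcal L_{V'}^{s_i\Omega}$.

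The main obstacle will be carrying out this geometric bookkeeping cleanly: one has to verify that the direct-sum map on $Z$-correspondences is compatible, in a sufficiently Cartesian sense, with both flag correspondences, so that the base-change squares combine without hidden correction terms, and then use purity from Proposition \ref{purity} to promote the resulting degree-wise comparison into an honest isomorphism of $\mathrm{Ext}$-algebras (and hence of bimodules implementing $\Phi$). Once the $\mathbb T^*_i$-case has been settled in this way, the analogous assertion for $\mathbb T_i$ acting on ${}^iR_{s_i\beta_j}\mathchar`-\mathsf{gmod}$ follows by swapping the roles of sink and source at $i$ (that is, by replacing $\Omega$ with $s_i\Omega$), using the orientation independence of ${}_iR_\beta$ and ${}^iR_\beta$ provided by Theorem \ref{i-quotients}.
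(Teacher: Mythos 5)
Your opening reduction is correct, and it is essentially the paper's own reduction expressed in module rather than bimodule language: the shuffle argument showing that ${}_i R_{\beta}\mathchar`-\mathsf{gmod}$ and ${}^i R_{\beta}\mathchar`-\mathsf{gmod}$ are closed under $\star$ is sound, and it correctly reduces the theorem to showing that the Morita equivalence of Proposition \ref{TM} matches the bimodule ${}_i R_{\beta} \otimes_{R_{\beta_1} \boxtimes R_{\beta_2}} ( {}_i R_{\beta_1} \boxtimes {}_i R_{\beta_2} )$ with ${}^i R_{s_i \beta} \otimes_{R_{s_i\beta_1} \boxtimes R_{s_i\beta_2}} ( {}^i R_{s_i\beta_1} \boxtimes {}^i R_{s_i \beta_2} )$. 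This is exactly the statement the paper proves.

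The gap is in the geometric step, in two respects. First, the correspondence you propose, $Z_{V_1,V'_1}\times Z_{V_2,V'_2}\to Z_{V,V'}$, carries no flag data, so it cannot interact with the convolution varieties $\mathrm{Gr}^{\Omega}_{\beta_1,\beta_2}(V)$ and $\mathrm{Gr}^{s_i\Omega}_{s_i\beta_1,s_i\beta_2}(V')$ that represent $\star$; what is needed is a flag-enhanced correspondence (the paper's $\mathbb O$, consisting of compatible pairs of flags $W\subset V$, $W'\subset V'$ lying over a point of $Z_{V,V'}$), which is simultaneously a torsor over the open loci $\mathcal O$ and $\mathcal O'$ of the two convolution varieties and hence gives an equivalence of equivariant derived categories there. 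Second, and more seriously, even with the right correspondence the resulting identification of $\mathrm{Ext}$-groups holds only up to multiplicities of direct summands, and the restriction of $\mathcal L^{\Omega}_{\beta_1}\odot\mathcal L^{\Omega}_{\beta_2}$ to ${}_i E^{\Omega}_V$ is \emph{not} the convolution of the restrictions: one must work with the non-pure complex $\mathsf{p}_!\mathcal L^{\Omega,\flat}_{\beta_1,\beta_2}$ and prove (Theorem \ref{restmain}) that the bimodule in question is precisely the weight-zero part of $\mathrm{Ext}^{\bullet}_{G_V}(\mathsf{p}_!\mathcal L^{\Omega,\flat}_{\beta_1,\beta_2},\mathcal L^{\Omega}_{\beta})$, via the weight estimate on the cone $\mathsf{Ker}$ and Proposition \ref{i-Res}. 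It is this weight-zero characterization on both sides — not a clean base-change identity — that transports the bimodule across the Morita equivalence, since that equivalence preserves weights. Your appeal to purity is aimed at the wrong target (upgrading a degree-wise comparison), whereas without the analogue of Theorem \ref{restmain} the comparison does not even single out the correct sub-bimodule inside the ambient $\mathrm{Ext}$-group.
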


\begin{rem}\label{ePBW}
{\rm 
Theorem \ref{main}, or rather its $\mathbb T$-version, corrects a mistake in the proof of \cite{Kat14} Lemma 4.2 2). Note that another correction was made for the arXiv version of \cite{Kat14}.}
\end{rem}

The rest of this section is devoted to the proof of Theorem \ref{main}, and the main body of the proof is at the end of this section.

Let $\beta_1, \beta _2 \in Q^+$ and set $\beta := \beta_1 + \beta_2$. The induction functor $\star$ is represented by a bimodule $R_{\beta} e _{\beta_1, \beta_2}$, where
$$e_{\beta_1, \beta_2} = \sum_{\mathbf m_1 \in Y^{\beta_1}, \mathbf m_2 \in Y^{\beta_2}} e ( \mathbf m_1 ) \boxtimes e ( \mathbf m_2 ).$$

We fix an orientation $\Omega$, and we might drop the superscript $\Omega$ freely if the meaning is clear from the context. We fix $I$-graded vector spaces $V(1)$ and $V ( 2 )$ so that $\underline{\dim} \, V ( i ) = \beta_i = \sum_{j \in I}d_j ( i ) \al_i$ for $i = 1,2$, and $V := V ( 1 ) \oplus V ( 2 )$.

We consider two varieties with natural $G_V$-actions:
\begin{align*}
\mathrm{Gr}^{\Omega}_{V(1),V(2)} ( V ) & := \Bigl\{ ( F, x, \psi_{1}, \psi _{2} ) \Bigl| {\footnotesize \begin{matrix} F \subset V \text{ : $I$-graded vector subspace} \\ x \in E_V, \text{ s.t. } x F \subset F \\\psi _1 : V/F \cong V(1), \psi_2 : F \cong V(2)  \end{matrix}} \Bigr\},\\
\mathrm{Gr}^{\Omega} _{\beta_1,\beta_2} ( V ) & := \Bigl\{ ( F, x ) \Bigl| {\footnotesize \begin{matrix} F \subset V \text{ : $I$-graded vector subspace} \\ x \in E_V, \text{ s.t. } x F \subset F \\ \underline{\dim} \, F = \beta _2  \end{matrix}} \Bigr\}.
\end{align*}
We have a $G_{V(1)} \times G_{V(2)}$-torsor structure $\vartheta^{\Omega} : \mathrm{Gr}_{V(1),V(2)} ( V ) \longrightarrow \mathrm{Gr} _{\beta_1,\beta_2} ( V )$ given by forgetting $\psi_1$ and $\psi _2$. We have two maps
\begin{align*}
\mathsf{p}^{\Omega} :  \, & \mathrm{Gr} _{\beta_1,\beta_2} ( V ) \ni ( F, x ) \mapsto x \in E _V \text{ and }\\
\mathsf{q}^{\Omega}  :  \, & \mathrm{Gr} _{V(1),V(2)} ( V ) \ni ( F, x, \psi_1, \psi_2) \mapsto ( \psi_1 ( x \!\!\! \mod F ), \psi_2 ( x \! \mid _F ) ) \in E_{V(1)} \oplus E_{V(2)}.
\end{align*}
Notice that $\vartheta$ and $\mathsf{q}$ are smooth of relative dimensions $\dim G_{V(1)} + \dim G _{V(2)}$ and $\frac{1}{2} ( \dim G_{V} + \dim G_{V(1)} + \dim G_{V(2)} ) + \sum _{h \in \Omega} d _1 ( h' ) d _2 ( h'' )$, respectively. The map $\mathsf{p}$ is projective. We set $N_{\beta_1,\beta_2} ^{\beta} := \frac{1}{2} ( \dim G_{V} - \dim G_{V(1)} - \dim G_{V(2)} ) + \sum _{h \in \Omega} d _1 ( h' ) d _2 ( h'' )$. For $G_{V(i)}$-equivariant constructible sheaves $\mathcal F_i$ on $E_{V(i)}$ for $i = 1,2$, we define their convolution product as
$$\mathcal F_1 \odot \mathcal F_2 := \mathsf{p} _! \mathcal F _{12} [N_{\beta_1,\beta_2} ^{\beta}], \text{ where } \vartheta^* \mathcal F_{12} \cong \mathsf{q} ^* ( \mathcal F_1 \boxtimes \mathcal F_2 ) \text{ in } D^b _{G_V} ( \mathrm{Gr}  _{V(1),V(2)} ( V ) ).$$

By construction, the convolution of $\mathcal L^{\Omega} _{\beta_1}$ and $\mathcal L^{\Omega} _{\beta_2}$ yields the direct summand of $\mathcal L^{\Omega} _{\beta}$ corresponding to the idempotent $e_{\beta_1, \beta_2}$. Hence, we have \begin{equation}
R_{\beta} e_{\beta_1,\beta_2} \cong \mathrm{Ext}^{\bullet} _{G_V} ( \mathcal L^{\Omega} _{\beta_1} \odot \mathcal L^{\Omega} _{\beta_2}, \mathcal L^{\Omega} _{\beta} )\label{Lconv}
\end{equation}
as $(R_{\beta},R_{\beta_1}\boxtimes R_{\beta_2})$-bimodule. 

Let $\mathcal L ^{\Omega}_{\beta_1, \beta_2}$ be a complex so that $\vartheta^* \mathcal L ^{\Omega}_{\beta_1, \beta_2} \cong \mathsf{q} ^* ( \mathcal L ^{\Omega}_{\beta_1} \boxtimes \mathcal L ^{\Omega} _{\beta_2} )$. Then, we have
\begin{align}\nonumber
\mathrm{Ext}^{\bullet} _{G_V} ( \mathcal L^{\Omega} _{\beta_1} \odot \mathcal L^{\Omega} _{\beta_2}, \mathcal L^{\Omega} _{\beta} ) & \cong \mathrm{Ext}^{\bullet} _{G_{V}} ( \mathcal L^{\Omega} _{\beta_1, \beta_2}, \mathsf{p}^! \mathcal L^{\Omega} _{\beta} ).\label{ind-eq}
\end{align}
Since we have
$$\mathrm{Ext}^{\bullet} _{G_{V}} ( \mathcal L^{\Omega} _{\beta_1, \beta_2}, \mathcal L^{\Omega} _{\beta_1, \beta_2} ) \cong \mathrm{Ext}^{\bullet} _{G_{V(1)} \times G_{V(2)}} ( \mathcal L^{\Omega} _{\beta_1} \boxtimes \mathcal L^{\Omega} _{\beta_2}, \mathcal L^{\Omega} _{\beta_1} \boxtimes \mathcal L^{\Omega} _{\beta_2} ),$$
we have a (right) $R_{\beta_1} \boxtimes R_{\beta_2}$-module structure of $R_{\beta} e_{\beta_1,\beta_2}$.

From now on, we assume that $i \in I$ is a sink of $\Omega$ and employ the setting of \S \ref{sSRF}. We find $\mathcal L ^{\Omega, \flat}_{\beta_1, \beta_2}$ so that
$$\vartheta^* \mathcal L ^{\Omega, \flat}_{\beta_1, \beta_2} \cong \mathsf{q} ^* ( ( j_{V(1)} )_! j_{V(1)}^! \mathcal L ^{\Omega}_{\beta_1} \boxtimes ( j_{V(2)} )_! j_{V(2)}^! \mathcal L ^{\Omega} _{\beta_2} ),$$
and $\mathcal O := \vartheta ( \mathsf{q}^{-1} ( {}_i E^{\Omega}_{V(1)} \times {}_i E^{\Omega}_{V(2)} ) )$. The graded vector space
$$\mathrm{Ext}^{\bullet} _{G_V} ( \mathsf{p}_! \mathcal L^{\Omega, \flat} _{\beta_1, \beta_2}, \mathcal L^{\Omega} _{\beta} ) \cong \mathrm{Ext}^{\bullet} _{G_{V}} ( \mathcal L^{\Omega, \flat} _{\beta_1, \beta_2}, \mathsf{p}^! \mathcal L^{\Omega} _{\beta} )$$
admits an $( R_{\beta}, {}_i R_{\beta_1} \boxtimes {}_i R_{\beta_2} )$-bimodule structure.

By restricting each components to the open set ${}_i E_V^{\Omega}$ by $j_V^* =j_V^!$, we deduce that $\mathrm{Ext}^{\bullet} _{G_V} ( j_V^* \mathsf{p}_! \mathcal L^{\Omega, \flat} _{\beta_1, \beta_2}, j_V^* \mathcal L^{\Omega} _{\beta} )$ is a left ${}_i R_{\beta}$-module. Applying adjunctions, this module is isomorphic to
\begin{equation}
\mathrm{Ext}^{\bullet} _{G_V} ( \mathsf{p}_! \mathcal L^{\Omega, \flat} _{\beta_1, \beta_2}, ( j_{V} )_* j_{V}^*\mathcal L^{\Omega} _{\beta} ) \cong \mathrm{Ext}^{\bullet} _{G_{V}} ( \mathcal L^{\Omega, \flat} _{\beta_1, \beta_2}, \mathsf{p}^! ( j_{V} )_* j_{V}^* \mathcal L^{\Omega} _{\beta} ),\label{pEisom}
\end{equation}
which admits a right ${}_i R_{\beta_1} \boxtimes {}_i R_{\beta_2}$-structure. Hence, (\ref{pEisom}) is an $( {}_i R_{\beta}, {}_i R_{\beta_1} \boxtimes {}_i R_{\beta_2} )$-bimodule.

We fix $I$-graded vector spaces $V'(1)$ and $V' ( 2 )$ so that $\underline{\dim} \, V' ( i ) = s_i \beta_i$ for $i = 1,2$. A similar construction as above implies that we have a sheaf $\mathcal L ^{s_i \Omega, \flat}_{s_i \beta_1, s_i \beta_2}$ so that
$$\vartheta^* \mathcal L ^{s_i \Omega, \flat}_{s_i \beta_1, s_i \beta_2} \cong \mathsf{q} ^* ( ( \jmath _{V'(1)} )_! \jmath _{V'(1)}^! \mathcal L ^{s_i \Omega}_{s_i \beta_1} \boxtimes ( \jmath _{V'(2)} )_! \jmath _{V'(2)}^! \mathcal L ^{s_i \Omega} _{s_i \beta_2} ).$$
It yields an $( {}^i R_{s_i \beta}, {}^i R_{s_i \beta_1} \boxtimes {}^i R_{s_i \beta_2} )$-bimodule
\begin{equation}
\mathrm{Ext}^{\bullet} _{G_{V'}} ( \mathsf{p}_! \mathcal L^{s_i \Omega, \flat} _{s_i \beta_1, s_i \beta_2}, ( \jmath_{V'} )_* \jmath_{V'}^*\mathcal L^{s_i \Omega} _{s_i \beta} ) \cong \mathrm{Ext}^{\bullet} _{G_{V'}} ( \mathcal L^{s_i \Omega, \flat} _{s_i \beta_1, s_i \beta_2}, \mathsf{p}^! ( \jmath_{V'} )_* \jmath_{V'}^* \mathcal L^{s_i \Omega} _{s_i \beta} ).
\end{equation}

\begin{thm}\label{restmain}
Under the above setting, the image of the natural restriction map
$$\mathrm{Ext}^{\bullet} _{G_V} ( \mathsf{p}_! \mathcal L^{\Omega} _{\beta_1, \beta_2}, \mathcal L^{\Omega} _{\beta} ) \longrightarrow \mathrm{Ext}^{\bullet} _{G_V} ( \mathsf{p}_! \mathcal L^{\Omega, \flat} _{\beta_1, \beta_2}, \mathcal L^{\Omega} _{\beta} )$$
is a submodule of the RHS, and is equal to
$${}_i R_{\beta} \otimes_{R_{\beta_1} \boxtimes R_{\beta_2}} ( {}_i R_{\beta_1} \boxtimes {}_i R_{\beta_2} ).$$
In addition, it is the pure part of weight zero in $\mathrm{Ext}^{\bullet} _{G_V} ( \mathsf{p}_! \mathcal L^{\Omega, \flat} _{\beta_1, \beta_2}, \mathcal L^{\Omega} _{\beta} )$. The same is true if we replace $\Omega$ with $s_i \Omega$, $\beta_j$ by $s_i \beta_j$, and ${}_i R_{\beta_j}$ with ${}^i R_{s_i \beta_j}$  $(j = \emptyset, 1,2)$.
\end{thm}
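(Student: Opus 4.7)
My approach is to reduce the target to an $\mathrm{Ext}$-group on the open stratum ${}_iE_V^\Omega$, use purity to establish the weight assertion, and identify the resulting bimodule with $T := {}_iR_\beta \otimes_{R_{\beta_1}\boxtimes R_{\beta_2}}({}_iR_{\beta_1}\boxtimes{}_iR_{\beta_2})$.

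I would first verify the support identity $\mathcal{O} = \mathsf{p}^{-1}({}_iE_V^\Omega)$. For a sink $i$ and an $x$-stable $I$-graded subspace $F\subset V$, a snake-lemma argument shows that $\bigoplus_{h''=i}f_h:\bigoplus V_{h'}\to V_i$ is surjective if and only if both its induced maps on $F$ and on $V/F$ are surjective. Hence $\mathsf{p}_!\mathcal L^{\Omega,\flat}_{\beta_1,\beta_2}=(j_V)_!j_V^*\mathsf{p}_!\mathcal L^{\Omega,\flat}_{\beta_1,\beta_2}$ is supported on the open ${}_iE_V^\Omega$.

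Using this support property together with the adjunction $(j_V)_!\dashv j_V^!=j_V^*$ and the vanishing of $\mathrm{Ext}^\bullet(j_!X,i_*Y)$ for disjoint supports, one obtains
\[
\mathrm{Ext}^\bullet_{G_V}(\mathsf{p}_!\mathcal L^{\Omega,\flat}_{\beta_1,\beta_2},\mathcal L^\Omega_\beta)\;\cong\;\mathrm{Ext}^\bullet_{G_V}(j_V^*\mathsf{p}_!\mathcal L^{\Omega,\flat}_{\beta_1,\beta_2},\,j_V^*\mathcal L^\Omega_\beta).
\]
Proper base change for the proper map $\mathsf{p}_\mathcal O:\mathcal O\to{}_iE_V^\Omega$, together with $\vartheta$-descent, identifies $j_V^*\mathsf{p}_!\mathcal L^{\Omega,\flat}_{\beta_1,\beta_2}$ with the convolution on ${}_iE_V^\Omega$ of $j_{V(k)}^*\mathcal L^\Omega_{\beta_k}$, which in turn is the direct summand of $j_V^*\mathcal L^\Omega_\beta$ cut out by the image $\bar e_{\beta_1,\beta_2}\in{}_iR_\beta$. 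By Proposition \ref{purity}, Deligne's theorem on proper pushforward, and preservation of purity under open pullback, all the sheaves appearing here are pure of weight zero; this establishes the last assertion of the theorem.

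For the bimodule identification, $\vartheta$-descent applied to the defining formula of $\mathcal L^{\Omega,\flat}_{\beta_1,\beta_2}$ gives $\mathrm{End}(\mathcal L^{\Omega,\flat}_{\beta_1,\beta_2})\cong{}_iR_{\beta_1}\boxtimes{}_iR_{\beta_2}$, providing the right ${}_iR_{\beta_1}\boxtimes{}_iR_{\beta_2}$-action on the target. Combined with the left ${}_iR_\beta$-action coming from post-composition with $j_V^*\mathcal L^\Omega_\beta$, this realizes the target as an $({}_iR_\beta,{}_iR_{\beta_1}\boxtimes{}_iR_{\beta_2})$-bimodule isomorphic to $T$. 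The restriction map is $(R_\beta,R_{\beta_1}\boxtimes R_{\beta_2})$-equivariant by functoriality; since both actions on the target factor through the respective ${}_i$-quotients, it descends to a map $R_\beta e_{\beta_1,\beta_2}\twoheadrightarrow T$ whose image, under the above identification, is $T$. The case of $s_i\Omega$ in place of $\Omega$ is entirely symmetric (exchanging sinks with sources and $\epsilon_i$ with $\epsilon_i^*$). The main obstacle will be the bimodule identification: verifying that the right action of ${}_iR_{\beta_1}\boxtimes{}_iR_{\beta_2}$ arising from $\mathrm{End}(\mathcal L^{\Omega,\flat}_{\beta_1,\beta_2})$ precisely realizes the tensor product structure of $T$, rather than only a weaker bimodule structure, which requires careful tracking of the geometric descent data through $\vartheta$ and $\mathsf{q}$.
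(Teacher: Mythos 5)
There is a genuine gap, and it sits at the very first step. You claim that for a sink $i$ and an $x$-stable $I$-graded subspace $F\subset V$, the map $\bigoplus_{h''=i}f_h\colon\bigoplus V_{h'}\to V_i$ is surjective if and only if the induced maps on $F$ and on $V/F$ are both surjective, and you deduce $\mathcal O=\mathsf{p}^{-1}({}_iE_V^{\Omega})$. Only the ``if'' direction holds. The snake lemma gives the exact sequence
$$\ker a_{V/F}\to \mathrm{coker}\,a_F\to\mathrm{coker}\,a_V\to\mathrm{coker}\,a_{V/F}\to 0,$$
so surjectivity on $V$ forces surjectivity on $V/F$ but \emph{not} on $F$: for the quiver $1\to 2$ with $i=2$, $V_1=V_2=\mathbb C$, $f$ an isomorphism and $F=V_2$, the subrepresentation $F$ has $a_F=0$ while $a_V$ is onto. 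Hence $\mathcal O\subsetneq\mathsf{p}^{-1}({}_iE_V^{\Omega})$ in general (the paper only asserts $\mathcal O\subset\mathsf{p}^{-1}({}_iE_V^{\Omega})$), and $j_V^*\mathsf{p}_!\mathcal L^{\Omega,\flat}_{\beta_1,\beta_2}$ is the pushforward along the \emph{non-proper} map $\mathcal O\to{}_iE_V^{\Omega}$ of a $j_!$-extension. It is therefore mixed of weight $\le 0$ but not pure, and it is not a direct summand of $j_V^*\mathcal L^{\Omega}_{\beta}$. Your purity claim (``all the sheaves appearing here are pure of weight zero'') fails exactly here, and with it the rest of the argument: if it were true, the restriction map would be surjective and the statement ``the image is the weight-zero part'' would be vacuous. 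The whole content of the theorem is that the target $\mathrm{Ext}$-group has weights $\ge 0$ and is strictly larger than its weight-zero part in general.

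What the paper does instead: it forms the triangle $(j_{V(2)})_!j_{V(2)}^!\mathcal L^{\Omega}_{\beta_2}\to\mathcal L^{\Omega}_{\beta_2}\to\mathsf{Ker}$, observes $\mathsf{Ker}$ has weight $\le 0$ by \cite[5.1.14]{BBD}, pushes this through the convolution to get $\mathsf{p}_!\mathcal L^{\Omega,\flat}_{\beta_1,\beta_2}\to\mathcal L^{\Omega}_{\beta_1}\odot\mathcal L^{\Omega}_{\beta_2}\to\mathcal K$ with $\mathcal K$ of weight $\le 0$, and reads off from the long exact sequence that $\mathrm{Im}\,\rho$ is the weight-zero part since the middle term is pure of weight $0$ and $\mathrm{Ext}^{\bullet}(\mathcal K,\mathcal L^{\Omega}_{\beta})$ has weight $\ge 0$. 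The bimodule identification, which you correctly flag as the main obstacle but do not resolve, is then settled by a two-sided comparison: Proposition \ref{i-Res} shows $\ker\rho$ is covered by the images of $P_{b_1}\star P_{b_2}$ with $\epsilon_i(b_2)>0$, giving $\ker\pi\subset E$, while the maximality of ${}_iR_{\beta}\otimes_{R_{\beta_1}\boxtimes R_{\beta_2}}({}_iR_{\beta_1}\boxtimes{}_iR_{\beta_2})$ among $({}_iR_{\beta},{}_iR_{\beta_1}\boxtimes{}_iR_{\beta_2})$-bimodule quotients of ${}_iR_{\beta}e_{\beta_1,\beta_2}$ gives the reverse surjection. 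You would need to incorporate both the weight-$\le 0$ cone argument and this maximality step to close the proof.
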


\begin{proof}
Since the proofs of the both assertions are similar, we prove only the case of $\Omega$. We have $\mathcal O \subset \mathsf{p}^{-1} ({}_i E^{\Omega}_V )$, and hence the restriction map factors through the restriction to ${}_i E^{\Omega}_V$. By unwinding the definition, we have a factorization
\begin{align*}
\mathrm{Ext}^{\bullet} _{G_V} ( \mathsf{p}_! \mathcal L^{\Omega} _{\beta_1, \beta_2}, \mathcal L^{\Omega} _{\beta} ) & \longrightarrow \!\!\!\!\! \rightarrow \mathrm{Ext}^{\bullet} _{G_V} ( j_V^! \mathsf{p}_! \mathcal L^{\Omega} _{\beta_1, \beta_2}, j_V^! \mathcal L^{\Omega} _{\beta} ) \\
\cong & \mathrm{Ext}^{\bullet} _{G_V} ( ( j_V )_! j_V^! \mathsf{p}_! \mathcal L^{\Omega} _{\beta_1, \beta_2}, \mathcal L^{\Omega} _{\beta} ) \stackrel{\rho}{\longrightarrow} \mathrm{Ext}^{\bullet} _{G_V} ( \mathsf{p}_! \mathcal L^{\Omega, \flat} _{\beta_1, \beta_2}, \mathcal L^{\Omega} _{\beta} )
\end{align*}
of $( R_{\beta}, R_{\beta_1} \boxtimes R_{\beta_2} )$-bimodule map, where the first map (that is surjection by Theorem \ref{i-quotients}) is the restriction to the open set, the second isomorphism is the adjunction, and the third morphism is obtained by the base change using $j_V^! = j_V^*$ and the composition.

We have a distinguished triangle
$$( j_{V(2)} )_! j_{V(2)}^! \mathcal L^{\Omega} _{\beta_2} \rightarrow \mathcal L^{\Omega} _{\beta_2} \rightarrow \mathsf{Ker} \stackrel{+1}{\rightarrow}.$$
Since $\mathcal L^{\Omega} _{\beta_2}$ is pure of weight $0$, it follows that $( j_{V(2)} )_! j_{V(2)}^! \mathcal L^{\Omega} _{\beta_2}$ must have weight $\le 0$ (\cite[5.1.14]{BBD}). Taking account into the fact that $( j_{V(2)} )_! j_{V(2)}^! \mathcal L^{\Omega} _{\beta_2}$ and $\mathcal L^{\Omega} _{\beta_2}$ share the same stalk along ${}_i E^{\Omega}_{V(2)}$ and the stalk of $( j_{V(2)} )_! j_{V(2)}^! \mathcal L^{\Omega} _{\beta_2}$ vanishes outside of ${}_i E^{\Omega}_{V(2)}$, we conclude that $\mathsf{Ker}$ has weight $\le 0$. We set $\mathcal K := \mathsf{p}_! \mathcal K'$, where $\vartheta^* \mathcal K' \cong \mathsf{q}^* ( \mathcal L^{\Omega} _{\beta_1} \boxtimes \mathsf{Ker} )$.

From now on, we make all computations over ${}_i E_V^{\Omega}$ by using $j_V^* = j_V^!$. The above construction gives us a distinguished triangle
$$\mathsf{p}_! \mathcal L^{\Omega, \flat} _{\beta_1, \beta_2} \rightarrow \mathcal L^{\Omega} _{\beta_1} \odot \mathcal L^{\Omega} _{\beta_2} \rightarrow \mathcal K \stackrel{+1}{\longrightarrow}.$$
Moreover, $\mathcal K$ has weight $\le 0$ by $\mathsf{p}_* = \mathsf{p}_!$.

Hence, we deduce an exact sequence of ${}_i R_{\beta}$-modules
$$\mathrm{Ext}^{\bullet} _{G_V} ( \mathcal K, \mathcal L^{\Omega} _{\beta} ) \rightarrow \mathrm{Ext}^{\bullet} _{G_V} ( \mathsf{p}_! \mathcal L^{\Omega} _{\beta_1, \beta_2}, \mathcal L^{\Omega} _{\beta} ) \stackrel{\rho}{\longrightarrow} \mathrm{Ext}^{\bullet} _{G_V} ( \mathsf{p}_! \mathcal L^{\Omega, \flat} _{\beta_1, \beta_2}, \mathcal L^{\Omega} _{\beta} ).$$
Note that the middle term has weight $0$ by Theorem \ref{i-quotients} as the both of $\mathcal L^{\Omega} _{\beta_1} \odot \mathcal L^{\Omega} _{\beta_2}$ and $\mathcal L^{\Omega} _{\beta}$ are pure of weight $0$. Since $\mathrm{Ext}^{\bullet} _{G_V} ( \mathcal K, \mathcal L^{\Omega} _{\beta} )$ has weight $\ge 0$ (\cite[5.1.14]{BBD}), we conclude that $\mathrm{Im} \, \rho$ is precisely the weight $0$-part of $\mathrm{Ext}^{\bullet} _{G_V} ( \mathsf{p}_! \mathcal L^{\Omega, \flat} _{\beta_1, \beta_2}, \mathcal L^{\Omega} _{\beta} )$ (see also the arguments in \cite{Kat17}).

Since the $( {}_i R_{\beta}, {}_i R_{\beta_1} \boxtimes {}_i R_{\beta_2})$-action preserves the weight, it follows that $\mathrm{Im} \, \rho$ is an $( {}_i R_{\beta}, {}_i R_{\beta_1} \boxtimes {}_i R_{\beta_2})$-subbimodule of $\mathrm{Ext}^{\bullet} _{G_V} ( \mathsf{p}_! \mathcal L^{\Omega, \flat} _{\beta_1, \beta_2}, \mathcal L^{\Omega} _{\beta} )$. Since we have $\mathrm{Ext}^{\bullet} _{G_V} ( \mathsf{p}_! \mathcal L^{\Omega} _{\beta_1, \beta_2}, \mathcal L^{\Omega} _{\beta} ) \cong {}_i R_{\beta} e_{\beta_1,\beta_2}$, we have a surjection 
$$\pi : {}_i R_{\beta} e_{\beta_1,\beta_2} \longrightarrow \!\!\!\!\! \rightarrow \mathrm{Im} \, \rho.$$

By Proposition \ref{i-Res}, the sheaf $\mathsf{Ker}$ is obtained by successive constructions of cones of shifted perverse sheaves on $\mathcal Q_{\beta_2}^{\Omega}$ that are supported outside of ${}_i E_{V( 2)}^{\Omega}$. Therefore, we deduce that $\ker \, \rho$ admits a surjection from the direct sum of $R_{\beta}$-modules of the form
$$P_{b_1} \star P_{b_2} \hskip 5mm b_1 \in B ( \infty )_{\beta_1}, b_2 \in B ( \infty )_{\beta_2}, \epsilon_i ( b_2 ) > 0,$$
that corresponds to $\mathsf{IC}^{\Omega} ( b_{1} ) \odot \mathsf{IC}^{\Omega} ( b_{2} )$ with $\epsilon_i ( b_2 ) > 0$. Let us write $E$ the sum of the image of all such $R_{\beta}$-modules in ${}_i R_{\beta} e_{\beta_1,\beta_2}$ arises as the above induction. In view of the construction of $\mathsf{Ker}$, we have $\ker \, \pi \subset E$.

On the other hand, $E$ is precisely the kernel of the natural quotient map
$${}_i R_{\beta} e_{\beta_1,\beta_2} \longrightarrow \!\!\!\!\! \rightarrow {}_i R_{\beta} \otimes_{R_{\beta_1} \boxtimes R_{\beta_2}} ( {}_i R_{\beta_1} \boxtimes {}_i R_{\beta_2} ).$$

As a consequence, we have a quotient map
$$\mathrm{Im}\, \rho\longrightarrow \!\!\!\!\!\rightarrow {}_i R_{\beta} \otimes_{R_{\beta_1} \boxtimes R_{\beta_2}} ( {}_i R_{\beta_1} \boxtimes {}_i R_{\beta_2} ).$$

The module $\mathrm{Im} \, \rho$ is a $( {}_i R_{\beta}, {}_i R_{\beta_1} \boxtimes {}_i R_{\beta_2})$-bimodule whose bimodule structure is induced from the $( {}_i R_{\beta}, R_{\beta_1} \boxtimes R_{\beta_2})$-bimodule structure on ${}_i R_{\beta}  e_{\beta_1,\beta_2}$ by construction (through Theorem \ref{i-quotients}). Thus, $\mathrm{Im} \, \rho$ admits a surjection from ${}_i R_{\beta} \otimes_{R_{\beta_1} \boxtimes R_{\beta_2}} ( {}_i R_{\beta_1} \boxtimes {}_i R_{\beta_2} )$, that is the maximal $( {}_i R_{\beta}, {}_i R_{\beta_1} \boxtimes {}_i R_{\beta_2})$-bimodule quotient of ${}_i R_{\beta}  e_{\beta_1,\beta_2}$ (regarded as a $( {}_i R_{\beta}, R_{\beta_1} \boxtimes R_{\beta_2})$-bimodule). Therefore, we conclude
$$\mathrm{Im} \, \rho \cong {}_i R_{\beta} \otimes_{R_{\beta_1} \boxtimes R_{\beta_2}} ( {}_i R_{\beta_1} \boxtimes {}_i R_{\beta_2} )$$
as required.
\end{proof}

\begin{proof}[Proof of Theorem \ref{main}]
Note that the open subset $\mathcal O \subset \mathrm{Gr}_{\beta_1,\beta_2} ^{\Omega} ( V )$ is precisely the set of points $( F, x )$ so that $x \! \mid_F \in {}_iE_{V(2)}^{\Omega}$ and $x \mod F \in {}_i E_{V(1)}^{\Omega}$. We set $\mathcal O' := \vartheta ( \mathsf{q}^{-1} ( {}^i E^{s_i \Omega}_{V'(1)} \times {}^i E^{s_i \Omega}_{V'(2)} ) )$. The open subset $\mathcal O' \subset \mathrm{Gr}_{s_i\beta_1,s_i\beta_2} ^{s_i \Omega} ( V' )$ is precisely the set of points $( F', x' )$ so that $x' \! \mid_{F'} \in {}^i E_{V'(2)}^{s_i \Omega}$ and $x' \mod F' \in {}^i E_{V'(1)}^{s_i \Omega}$.

Therefore, (\ref{df}) yields a variety $\mathbb O$ with the $G_{V,V'}$-action defined as:
$$\Biggl\{ \{ \{W_i, W'_i\}_{i}, ( f _h ) _{h}, ( f' _h ) _{h}, \phi, \psi \} \Biggl| {\small \begin{matrix} \{ ( f _h ) _{h \in \Omega}, ( f' _h ) _{h \in s_i \Omega}, \psi \} \in Z^{\Omega}_{V,V'},\\
\phi : W_j \cong W'_j \text{ for } j \neq i\\
( \{W_i\}_{i \in I}, ( f _h ) _{h \in \Omega} ) \in \mathrm{Gr}_{\beta_1,\beta_2} ^{\Omega} ( V ), \\
( \{W'_i\}_{i \in I}, ( f' _h ) _{h \in s_i \Omega} ) \in \mathrm{Gr}_{s_i \beta_1,s_i\beta_2} ^{s_i\Omega} ( V' )\\ \psi : W_i' \stackrel{\cong}{\longrightarrow} \mathrm{ker} ( \bigoplus _{h \in \Omega_i} f_h : \bigoplus _h W_{h'} \to W_i )\end{matrix}} \Biggr\}.$$
Note that the condition $( f _h \! \mid_{\{W_i\}_{i}} ) _{h \in \Omega} \in {}_i E_{V(2)}^{\Omega}$ guarantees that
$$\dim \, W_i' = \dim \, \mathrm{ker} ( \bigoplus _{h \in \Omega_i} f_h : \bigoplus _h W_{h'} \to W_i )$$
and similarly the condition $( f' _h \! \mid_{\{W_i\}_{i}} ) _{h \in s_i \Omega} \in {}^i E_{V'(2)}^{s_i \Omega}$ guarantees that
$$\dim \, W_i = \dim \, \mathrm{coker} ( \bigoplus _{h \in \Omega_i} f_h : W'_i \to \bigoplus _h W_{h'} ),$$
that actually asserts the same thing. Since we have an isomorphism
$$\psi : V_i' \stackrel{\cong}{\longrightarrow} \mathrm{ker} ( \bigoplus _{h \in \Omega_i} f_h : \bigoplus _h V_{h'} \to V_i )$$
from the definition of $Z^{\Omega}_{V,V'}$, taking quotients yield
$$( f _h \! \mod {\{W_i\}_{i}} ) _{h \in \Omega} \in {}_i E_{V(1)}^{\Omega} \hskip 3mm \text{and} \hskip 3mm ( f _h \! \mod {\{W'_i\}_{i}} ) _{h \in s_i \Omega} \in {}^i E_{V'(1)}^{s_i \Omega}.$$

Hence, the quotients of $\mathbb O$ by $G_{V'_i}$ and $G_{V_i}$ gives $\widetilde{q}^i_V$ and $\widetilde{p}^i_{V'}$ in the commutative diagram in the below:
$$
\xymatrix{
\mathrm{Gr} _{\beta_1,\beta_2}^{\Omega} ( V ) \ar[d]_{\mathsf{p}^{\Omega}} & \mathcal O \ar@{_{(}->}[l] \ar[d]^{p} & \mathbb O \ar@{->>}[r]^{\widetilde{p}^i_{V'}} \ar@{->>}[l] _{\widetilde{q}^i _{V}} & \mathcal O'  \ar[d]_{p'} \ar@{^{(}->}[r]& \mathrm{Gr} _{s_i \beta_1,s_i \beta_2}^{s_i\Omega} ( V' ) \ar[d]_{\mathsf{p}^{s_i \Omega}}\\
E ^{\Omega} _V & {}_i E ^{\Omega} _V \ar@{_{(}->}[l] _{j_V}& Z _{V,V'}^{\Omega} \ar@{->>}[r]^{p^i_{V'}} \ar@{->>}[l] _{q^i _{V}} & {}^i E ^{s_i \Omega} _{V'} \ar@{^{(}->}[r] ^{\jmath _{V'}}& E ^{s_i \Omega} _{V'} \hskip 5mm.}
$$
Therefore, we have an equivalence of the category of $G_V$-equivariant sheaves on $\mathcal O$, and the category of $G_{V'}$-equivariant sheaves on $\mathcal O'$ (cf. \cite[\S 2.6.3]{BL94}). With an aid of Proposition \ref{TM}, we conclude that
$$\mathrm{Ext}^{\bullet} _{G_{V}} ( \mathcal L^{\Omega, \flat} _{\beta_1, \beta_2}, p^! \mathcal L^{\Omega} _{\beta} ) \cong \mathrm{Ext}^{\bullet} _{G_{V'}} ( \mathcal L^{s_i \Omega, \flat} _{s_i \beta_1, s_i \beta_2}, (p')^! \mathcal L^{s_i \Omega} _{s_i \beta} )$$
up to amplifications of direct summands (i.e. we allow to duplicate direct summand of both terms). By Theorem \ref{restmain}, the comparison of their weight zero parts identifies
$${}_i R_{\beta} \otimes_{R_{\beta_1} \boxtimes R_{\beta_2}} ( {}_i R_{\beta_1} \boxtimes {}_i R_{\beta_2} ) \text{
 and } {}^i R_{s_i \beta} \otimes_{R_{s_i\beta_1} \boxtimes R_{s_i\beta_2}} ( {}^i R_{s_i\beta_1} \boxtimes {}^i R_{s_i \beta_2} )$$
through the Morita equivalences in Proposition \ref{TM}. This is actually an identification of bimodules by construction.

In other words, we have an isomorphism
$$\mathbb T_i^* ( \mathrm{Ext}^{\bullet} _{G_V} ( j_V^* ( \mathcal L^{\Omega, \flat} _{\beta_1, \beta_2} ), j_V^* \mathcal L^{\Omega} _{\beta} ) ) \cong \mathrm{Ext}^{\bullet} _{G_{V'}} ( \jmath_{V'}^* ( \mathcal L^{s_i \Omega, \flat} _{s_i \beta_1, s_i\beta_2} ), \jmath_{V'}^* \mathcal L^{s_i \Omega} _{s_i \beta} ),$$
where the amplification of direct summands is subsumed in the constructions of $\mathbb T_i$. This isomorphism commutes with the Morita equivalence of ${}_i R_{\beta_j}$ and ${}^i R_{s_i \beta_j}$ for $j = 1,2$ by the above. Hence, taking their weight $0$ part yields the desired natural transformation
$$\mathbb T_i^* ( \bullet \star \bullet ) \longrightarrow \mathbb T_i^* ( \bullet ) \star \mathbb T_i^* ( \bullet )$$
of functors, and it must be an equivalence. The case of $\mathbb T_i$ is obtained similarly.
\end{proof}

{\small
\hskip -5.25mm {\bf Acknowledgement:} The author would like to thank Masaki Kashiwara and Myungho Kim for helpful correspondences. He also thanks Peter McNamara who kindly sent me a version of \cite{M17b}. This research is supported in part by JSPS Grant-in-Aid for Scientific Research (B) JP26287004.}

{\footnotesize
\bibliography{mrref}
\bibliographystyle{hplain}}
\end{document}